\newtheorem{thm}{Theorem}[section]
\newtheorem{lem}[thm]{Lemma}
\newtheorem{prop}[thm]{Proposition}
\newtheorem{cor}[thm]{Corollary}
\theoremstyle{definition}
\newtheorem{defn}[thm]{Definition}
\newtheorem{ex}[thm]{Example}
\newtheorem{obs}[thm]{Observation}
\newcommand{\RR}{\mathbb{R}}
\newcommand{\ZZ}{\mathbb{Z}}
\newcommand{\NN}{\mathbb{N}}
\newcommand{\CC}{\mathbb{C}}
\newcommand{\lift}[1]{\widetilde{#1}}
\DeclareMathOperator{\rank}{rank}
\DeclareMathOperator{\rankzn}{rank_{\ZZ}^+}
\DeclareMathOperator{\rankp}{rank^{+}}
\DeclareMathOperator{\spa}{span}
\title{A Yannakakis-type theorem for lifts of affine semigroups}
\author{
	Jo\~ao Gouveia\thanks{CMUC, Department of Mathematics, University of Coimbra, 3001-454 Coimbra, Portugal.
		This author was partially supported by Centro de Matemática da Universidade de Coimbra (CMUC),
 funded by the Portuguese Government through FCT/MCTES, DOI 10.54499/UIDB/00324/2020.
		Email: \href{jgouveia@mat.uc.pt }{jgouveia@mat.uc.pt}.} \and
	Amy Wiebe\thanks{Department of Mathematics, University of British Columbia, Kelowna, British Columbia, V1V 1V7, Canada.
		This author was partially funded by the Natural Sciences and Engineering Research Council of Canada (cette recherche est partiellement financ\'ee par le Conseil de recherches en sciences naturelles et en g\'enie du Canada), Discovery  Grant \#2024-04643.
		Email: \href{amy.wiebe@ubc.ca}{amy.wiebe@ubc.ca}.}
}
\date{}
\begin{document}

\maketitle

\begin{abstract}
    Yannakakis' theorem relating the extension complexity of a polytope to the size of a nonnegative factorization of its slack matrix is a seminal result in the study of lifts of convex sets. Inspired by this result and the importance of lifts in the setting of integer programming, we show that a similar result holds for the discrete analog of convex polyhedral cones---affine semigroups. We define the notions of the integer slack matrix and a lift of an affine semigroup. We show that many of the characterizations of the slack matrix in the convex cone setting have analogous results in the affine semigroup setting. We also show how slack matrices of affine semigroups can be used to obtain new results in the study of nonnegative integer rank of nonnegative integer matrices. 
\end{abstract}

\section{Introduction}

A considerable body of work exists studying the possible approaches for dealing with integer linear programs. A commonly used procedure is to replace the discrete feasible set by its convex hull and try to find a suitable compact description of that hull or, at least, a relaxation of it. In order to find such a description, we often resorts to lifts; that is, we define a higher dimensional set that projects onto the set we are interested in and that has, in some sense, a small description. This general framework is popular both in theory and in practice and has provided deep insights into several important problems. 

Conceptually, one can think of this approach as a two-step process: first we go from the discrete setting to the convex setting, and there we use a lifting procedure. The motivation for the present paper stems from a simple question,  apparently unaddressed by the existing body of literature: what happens if we try to bypass one of the steps and immediately define a lift that respects the structure of the integer points within the original feasible region? In this work we attempt to set up the machinery needed to address this question, and take the first steps in establishing a theory of discrete lifts, obtaining some interesting byproducts in the process.

The natural objects to focus on when attempting such a task are affine semigroups. They are the discrete analog of convex polyhedral cones, and play an important role in the theory of integer programming. Thus, it makes sense to begin by exploring how the tools used in the study of convex cones may be expanded to this discrete setting. Moreover, affine semigroups appear as objects of importance in a variety of other areas including algebraic geometry, commutative algebra, number theory, and combinatorics. This suggests that any results obtained will have interesting interpretations in other domains of mathematics. 

The central result in  the study of lifts of convex sets is Yannakakis' theorem \cite{Yann} and its variants. Associated to any polyhedral convex cone there exists a nonnegative matrix, called its slack matrix, that consists of the evaluations of a set of representatives of the extreme rays of the dual cone at a set of representatives of the extreme rays of the cone itself. In its polyhedral convex cone version, Yannakakis' theorem states that we can write a polyhedral cone as a projection of a polyhedral cone with $k$ facets if and only if we can factorize its slack matrix as a product of two nonnegative matrices with the inner dimension of the product being $k$.

In order to get a discrete version of this result we must start by defining a notion of slack matrix for affine semigroups. Slack matrices of polyhedral cones are an interesting class of nonnegative matrices in their own right. They have been studied in detail, for example in \cite{GOUVEIA20132921}, and offer canonical representations for cones that can be used for the study of realization spaces of polytopes \cite{gouveia2019slack}. We show that the integer slack matrices we define satisfy many of the characterizations of general slack matrices of cones, and also present a canonical representation of the semigroup. Armed with this new notion, we proceed to introduce a definition for a lift of an affine semigroup. Again, we are inspired by the notion of a lift for polyhedral cones. 

Our main result is that a Yannakakis-type theorem also holds in the affine semigroup setting. More precisely, the minimal size of a lift of a semigroup equals the nonnegative integer rank of its slack matrix. We briefly explore the potential of interpreting this result in the settings of integer programming and toric geometry. 
The notion of nonnegative integer rank is not new, and there have been some limited previous results from a matrix theory point of view. We end the paper by exploiting the structure of slack matrices of $2$-dimensional semigroups to get new results in the study of nonnegative integer rank of a matrix.

The remainder of this paper is organized as follows. In Section~\ref{sec:bg}, we provide some background and notation for working with affine semigroups. 
In Section~\ref{sec:slack}, we introduce the notion of the slack matrix of an affine semigroup and give some characterizations of these matrices following~\cite{GOUVEIA20132921}. In Section~\ref{sec:lift}, we define the lift of an affine semigroup and present a Yannakakis-type theorem relating lifts of affine semigroups to factorizations of their slack matrices. We also include some discussion on the application of these lifts to toric geometry and integer programming. Finally, in Section~\ref{sec:nn_int_rank}, we use slack matrices of 2-dimensional affine semigroups to provide new results on the nonnegative integer rank of nonnegative integer matrices. Section~\ref{sec:concl} concludes the work with some remaining questions.

\section{Background and definitions}
\label{sec:bg}

A \emph{lattice} in $\RR^n$ is a discrete additive subgroup of $\RR^n$, i.e., a set $\Lambda \in \RR^n$ such that $x-y \in \Lambda$ for all $x,y \in \Lambda$ and every element of $\Lambda$ is isolated. The most important example of such a lattice is $\ZZ^n$. A \emph{basis} of a lattice is a set of linearly independent vectors $\{v_1,...,v_d\} \in \Lambda$ such that 
$$\Lambda=\{ m_1 v_1 + m_2 v_2 + \cdots + m_d v_d \, | \, m_1,m_2,...,m_d \in \ZZ\}.$$
The \emph{dual lattice} to $\Lambda$ is the lattice 
$$\Lambda^*=\{w \in \textup{span}(\Lambda) \, | \, \langle w,v \rangle \in \ZZ, \textup{ for all } v \in \Lambda\}.$$
\begin{ex} \label{ex:latticedual}
Consider the lattice $\Lambda=\{ (x,y) \in \ZZ^2 \, | \, x+y \in 2 \ZZ\}$. A basis for this lattice is, for instance, $B=\{(2,0),(1,-1)\}$. Its dual is the lattice
$$\Lambda^*=\{(a,b) \in \RR^2 \, : \, ax + by \in \ZZ, \textup{ for all } (x,y) \in \Lambda\}=\{(a,b) \in (\ZZ/2)^2 \, | \, a+b \in \ZZ\}.$$
\begin{figure}
\begin{center}
\includegraphics[width=0.25\linewidth]{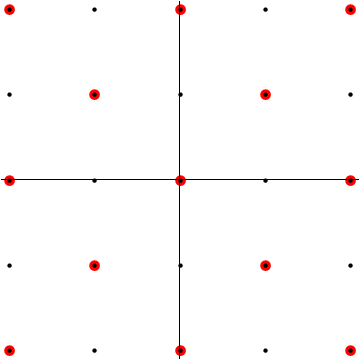}
\hspace{2cm}
\includegraphics[width=0.25\linewidth]{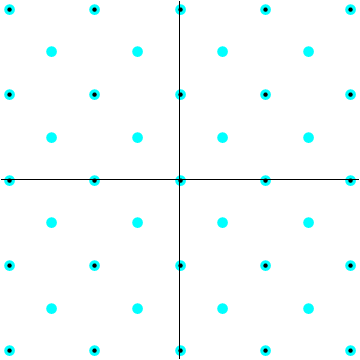}
\end{center}
\caption{Lattice of Example~\ref{ex:latticedual} and its dual.} \label{fig:lattice_duality}
\end{figure}
In Figure \ref{fig:lattice_duality} we can see both primal and dual lattices.
\end{ex}

We are interested in \emph{affine semigroups}. We can think of these as sets $\Gamma \in \RR^n$ that are closed under addition, contain $0$, and are finitely generated with the additional property that the set $\Lambda_{\Gamma}=\{x-y \, | \, x,y \in \Gamma\}$ is a lattice. In other words, sets of the form
$$    \Gamma=\{  m_1 v_1 + m_2 v_2 + \cdots + m_k v_k \, | \, m_1,m_2,...,m_k \in \ZZ_+ \}$$
for some $v_1,...,v_k \in \RR^n$, not necessarily linearly independent, such that 
$$\Lambda_{\Gamma}=\{  m_1 v_1 + m_2 v_2 + \cdots + m_k v_k \, | \, m_1,m_2,...,m_k \in \ZZ \}$$
is a lattice, i.e., it has no accumulation points.

Associated to any affine semigroup $\Gamma$, there is the cone of all real nonnegative combinations of elements of $\Gamma$. In other words,
$$K_{\Gamma}=\{ \lambda_1 v_1 + \lambda_2 v_2 + \cdots + \lambda_k v_k \, | \,\lambda_1,\lambda_2,...,\lambda_k \in \RR_+ \}  $$
where $v_1,...,v_k$ span $\Gamma$. Note that $\Gamma \subseteq \Lambda_{\Gamma} \cap K_{\Gamma}$. Moreover, Gordan's Lemma guarantees that 
$\Lambda_{\Gamma} \cap K_{\Gamma}$ is finitely generated, and hence is also an affine semigroup. We  call this semigroup the \emph{normalization} of $\Gamma$ and denote it by $\overline{\Gamma}$. An affine semigroup $\Gamma$ is said to be \emph{normal} if $\Gamma=\overline{\Gamma}$,

\begin{ex} \label{ex:affine}
Consider the affine semigroup $\Gamma$ generated by $\{(1,0),(1,1),(3/2,3/2)\}$. Then, $\Lambda_{\Gamma}=\{(a,b) \in (\ZZ/2)^2 \, | \, a+b \in \ZZ\}$, the same as the dual lattice in Example \ref{ex:latticedual}. Moreover $K_{\Gamma}$ is the cone spanned by $(1,0)$ and $(1,1)$. In Figure \ref{fig:affsemigroup} we can see the semigroup, its lattice and its cone.
\begin{figure}
\begin{center}
\includegraphics[width=0.25\linewidth]{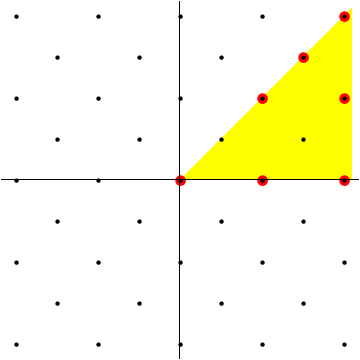}
\end{center}
\caption{Affine semigroup of Example~\ref{ex:affine} with its associated lattice and cone.} \label{fig:affsemigroup}
\end{figure}
We can see that $\Gamma \not = \overline{\Gamma}$, since there are points in $K_{\Gamma} \cap \Lambda_{\Gamma}$ that are not in $\Gamma$,  hence this affine semigroup is not normal.
   
\end{ex}

Affine semigroups are a discrete counterpart to convex cones, which have a rich theory of duality. We wish to explore how this duality translates to the discrete setting. The dual of a convex cone $K \subseteq \RR^n$ is the set $K^*=\{ y \in \RR^n \, | \, \langle x,y \rangle \geq 0 \textrm{ for all } x \in K\}$, which we can identify as the set of linear operators wich take nonnegative values on $K$, i.e, with the homomorphism cone $\textup{Hom}(K,\RR_+)$.

A simple idea to generalize this is then to define the dual of an affine semigroup $\Gamma$ as 
$$\Gamma^*=\Lambda_{\Gamma}^* \cap K_{\Gamma}^*,$$ which can be identified as $\textup{Hom}(\Gamma,\ZZ_+)$. This is the definition seen, for instance, in \cite{ROSALES1999175}. 

However, this definition is not always straightforward. 

\begin{ex}
Consider $\Gamma=\ZZ^n$. Then $\Lambda_{\Gamma}=\Lambda_{\Gamma}^*=\Gamma$ and $K_{\Gamma}=\RR^n$ which implies $K_{\Gamma}^*=\{0\}$, so $\Gamma^*=\{0\}$. By our definitions $\Lambda_{\Gamma^*}=\Gamma^*=\{0\}$, and therefore $(\Gamma^*)^*=\{0\}$.
\end{ex}

The main issue here is that we would like both primal and dual cones to be full dimensional. This ensures that the intersection of the lattice with the cone contains a set of generators for the lattice. The primal cone ambient space is the span of the lattice, so it is always full dimensional. In order to guarantee that the dual cone is also full dimensional we need to enforce that the primal cone is pointed, i.e, that it contains no lines. This is the same as enforcing that the affine semigroup $\Gamma_p$  is  \emph{positive}, i.e., zero is the only element $x \in \Gamma_p$ such that $-x \in \Gamma_p$.

\begin{lem} \label{lem:semigroup_dual_properties}
    Given a positive affine semigroup $\Gamma\subset\RR^n$, we have:
    \begin{enumerate} 
    \item $K_{\Gamma^*}=K_{\Gamma}^*$ \label{lem:pos_dual_cone}
    \item $\Lambda_{\Gamma^*} = \Lambda_\Gamma^*$;
    \label{lem:pos_dual_lattice}
    \item $(\Gamma^*)^*=\overline{\Gamma}$;
    \label{lem:pos_dual_semi}
    \item If $\Gamma$ is normal, $(\Gamma^*)^*=\Gamma$.
    \label{lem:pos_dual_normal}
    \end{enumerate}
\end{lem}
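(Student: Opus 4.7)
The plan is to prove (1) and (2) first, since (3) follows formally by applying classical biduality for closed convex cones and for lattices, and (4) is then immediate from the definition of a normal semigroup. Throughout, I would work inside the ambient space $V := \spa(\Lambda_\Gamma)$, where both lattice and cone dualities live naturally.

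For (1), the inclusion $K_{\Gamma^*} \subseteq K_\Gamma^*$ is trivial since $\Gamma^* \subseteq K_\Gamma^*$ and $K_\Gamma^*$ is closed under nonnegative combinations. For the reverse, note that $K_\Gamma$ is full-dimensional in $V$ (its generators span $\Lambda_\Gamma$) and pointed, the latter because $\Gamma$ is positive: any line in $K_\Gamma$ would be rational, and after clearing denominators would produce nonzero $w, -w \in \Gamma$. Consequently $K_\Gamma^* \subset V$ is itself a pointed, full-dimensional rational polyhedral cone with respect to the full-rank lattice $\Lambda_\Gamma^*$. Gordan's Lemma then guarantees that $\Gamma^* = \Lambda_\Gamma^* \cap K_\Gamma^*$ is finitely generated and, crucially, that its generators already span $K_\Gamma^*$ as a cone, giving $K_{\Gamma^*} = K_\Gamma^*$.

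For (2), $\Gamma^* \subseteq \Lambda_\Gamma^*$ gives $\Lambda_{\Gamma^*} \subseteq \Lambda_\Gamma^*$. For the converse, I would pick a lattice point $u \in \Lambda_\Gamma^*$ in the relative interior of $K_\Gamma^*$, which exists because that interior is a nonempty open cone in $V$ and $\Lambda_\Gamma^*$ has full rank in $V$. For any $w \in \Lambda_\Gamma^*$ and $t \in \ZZ_+$ sufficiently large, the vector $tu + w = t(u + w/t)$ lies in the interior of $K_\Gamma^*$ and in $\Lambda_\Gamma^*$, so $tu + w \in \Gamma^*$. Then $w = (tu + w) - tu$ exhibits $w$ as a difference of two elements of $\Gamma^*$, so $w \in \Lambda_{\Gamma^*}$.

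For (3), combining (1) and (2) with biduality ($K_\Gamma^{**} = K_\Gamma$ since $K_\Gamma$ is a closed convex cone, and $\Lambda_\Gamma^{**} = \Lambda_\Gamma$ since $\Lambda_\Gamma$ is a full-rank lattice in $V$) yields
$$(\Gamma^*)^* = \Lambda_{\Gamma^*}^* \cap K_{\Gamma^*}^* = \Lambda_\Gamma \cap K_\Gamma = \overline{\Gamma},$$
and (4) is then the tautology $\overline{\Gamma} = \Gamma$ under normality. The main obstacle is (1): one has to be careful that positivity of $\Gamma$ really does force $K_\Gamma$ to be pointed so that $K_\Gamma^*$ is full-dimensional in $V$, because without that full-dimensionality the set $\Lambda_\Gamma^* \cap K_\Gamma^*$ would live in a proper face and could not span $K_\Gamma^*$. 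Once this foothold is in place, the rest reduces to bookkeeping via Gordan's Lemma and standard biduality.
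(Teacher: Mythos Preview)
Your proof is correct and follows essentially the same route as the paper: establish (1) and (2) directly from pointedness/full-dimensionality of $K_\Gamma$ and $K_\Gamma^*$, then deduce (3) and (4) from cone and lattice biduality. One small quibble: in (1) the fact that the generators of $\Gamma^*$ span all of $K_\Gamma^*$ is not Gordan's Lemma per se but rather the rationality of $K_\Gamma^*$ (each extreme ray contains a point of $\Lambda_\Gamma^*$), which is exactly what the paper invokes.
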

\begin{proof}
For \eqref{lem:pos_dual_cone} the first thing we have to note is that $K_{\Gamma}$ always has the same dimension as the span of $\Lambda_\Gamma$, since by definition it contains a basis of that space. Furthermore, it is pointed by the positivity of $\Gamma$, hence the basic properties of convex cones tell us that $K_{\Gamma}^*$ is also a pointed full-dimensional cone. Note that ${\Gamma^*} \subseteq K_{\Gamma}^*$ by definition so we have $K_{\Gamma^*} \subseteq K_{\Gamma}^*$ . Furthermore,  the dual of a rational cone is always rational itself, so there is a collection of points in $\Lambda_\Gamma^*$ that generate the cone $K_{\Gamma}^*$. By definition these points belong to $\Gamma^*$, and so we have the inclusion $K_{\Gamma}^* \subseteq K_{\Gamma^*}$.

For \eqref{lem:pos_dual_lattice}, since $K_{\Gamma}^*$ has an open interior it contains arbitrarily large open balls centered around lattice points. That means that given some generating set for $\Lambda_\Gamma^*$ we can find a lattice translate of it that is entirely inside $K_{\Gamma}^*$, which means it is part of 
$\Gamma^* $. Hence $\Lambda_\Gamma^*$ is contained in $\Lambda_{\Gamma^*}$ and since the other inclusion is direct from definition we have the second statement.

From these first two results, \eqref{lem:pos_dual_semi} follows immediately since $(\Gamma^*)^*=(K_{\Gamma^*})^* \cap (\Lambda_{\Gamma^*})^*$. 
Then by \eqref{lem:pos_dual_cone}, $(K_{\Gamma^*})^* = K_{\Gamma}$, and by \eqref{lem:pos_dual_lattice}, $(\Lambda_{\Gamma^*})^* = (\Lambda_{\Gamma}^*)^*= \Lambda_{\Gamma}$, where the last equality follows from basic lattice theory. 

The last statement follows immediately from \eqref{lem:pos_dual_semi}.
\end{proof}

Lemma~\ref{lem:semigroup_dual_properties} shows that when restricted to positive normal affine semigroups, this definition of the dual affine semigroup is nicely symmetric. However, if we consider non-normal affine semigroups, we lose symmetry: while every affine semigroup has a dual, not every affine semigroup is a dual of another affine semigroup. One could sacrifice the unicity of the dual to fix this shortcoming but in this paper we will content ourselves to lose symmetry and work with this more straightforward definition. 

\begin{ex} \label{ex:affine_dual}
Let us revisit the affine semigroup of Example \ref{ex:affine}. The dual lattice to $\Lambda_{\Gamma}$ is $\Lambda_\Gamma^*=\{ (x,y) \in \ZZ^2 \, | \, x+y \in 2 \ZZ\}$, as we saw in Example \ref{ex:latticedual}. The dual cone to $K_{\Gamma}$ is the cone $K_\Gamma^*$ that is spanned by $(0,1)$ and $(1,-1)$. Now $\Gamma^*$ is the normal affine semigroup obtained by intersecting $\Lambda_\Gamma^*$ and $K_\Gamma^*$, i.e., the normal semigroup generated by $(0,2)$ and $(1,-1)$, represented in Figure \ref{fig:affsemigroupdual}. 
\begin{figure}
\begin{center}
\includegraphics[width=0.25\linewidth]{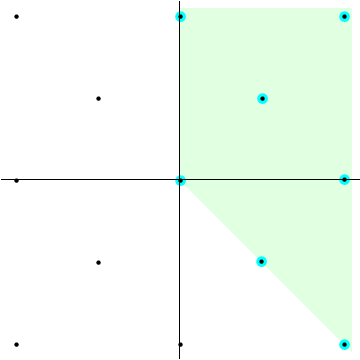}
\end{center}
\caption{Affine semigroup of Example~\ref{ex:affsemigroupdual} its associated lattice and cone.} \label{fig:affsemigroupdual}
\end{figure}
\label{ex:affsemigroupdual}
\end{ex}

\section{Slack matrix of a positive affine semigroup}
\label{sec:slack}

In what follows, we will restrict ourselves to positive affine semigroups, so that the conclusions of Lemma \ref{lem:semigroup_dual_properties} always hold.  Another nice consequence of enforcing positivity is the following fact, found for instance in \cite{Bruns2001}.

\begin{lem}
A positive affine semigroup has a unique minimal generating set.
\label{lem:pos_mingenset}
\end{lem}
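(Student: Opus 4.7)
The plan is to exhibit the unique minimal generating set explicitly as the set $I$ of \emph{irreducible} elements of $\Gamma$, where $x \in \Gamma \setminus \{0\}$ is called irreducible if any decomposition $x = y+z$ with $y,z \in \Gamma$ forces $y = 0$ or $z = 0$. Everything will then reduce to showing (a) that every generating set contains $I$, and (b) that $I$ itself generates $\Gamma$.

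Part (a) is essentially formal. If $\Gamma$ is generated by $v_1,\ldots,v_k$ and $x \in I$, then writing $x = \sum_i m_i v_i$ with $m_i \in \ZZ_+$, the assumption $x \neq 0$ forces some $m_i > 0$; if $\sum_i m_i \geq 2$ we could split $x$ as a sum of two nonzero elements of $\Gamma$, contradicting irreducibility. Hence $x$ coincides with one of the generators. In particular, since $\Gamma$ is finitely generated by definition, $I$ is finite and is contained in every generating set.

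Part (b) is the substantive step, and my plan is a well-founded induction using a strictly positive integral functional. By positivity of $\Gamma$, the cone $K_\Gamma$ is pointed, and since its generators lie in $\Gamma \subseteq \Lambda_\Gamma$ the cone is rational with respect to $\Lambda_\Gamma$. Therefore $K_\Gamma^*$ is a full-dimensional rational cone with respect to $\Lambda_\Gamma^*$, and I can choose $\ell \in \Lambda_\Gamma^* \cap \textup{int}(K_\Gamma^*)$. Such an $\ell$ is strictly positive on $K_\Gamma \setminus \{0\}$ and integer-valued on $\Lambda_\Gamma$, so $\ell(\Gamma) \subseteq \ZZ_{\geq 0}$ with $\ell(x) = 0$ only at $x = 0$. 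Given $x \in \Gamma$, either $x = 0$, or $x \in I$, or $x = y+z$ with $y,z \in \Gamma \setminus \{0\}$; in the last case $\ell(y),\ell(z) < \ell(x)$ are nonnegative integers, so induction on $\ell(x)$ expresses $y$ and $z$, hence $x$, as nonnegative integer combinations of irreducibles.

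Combining (a) and (b), $I$ is the unique minimal generating set. The main obstacle is the construction of $\ell$: one needs both the pointedness of $K_\Gamma$ (to get a full-dimensional dual and thus a strictly positive functional) and the rational/lattice structure (to get integer values, and hence a well-ordered set on which to induct). Without positivity the cone $K_\Gamma$ could contain a line and no strictly positive functional would exist; without rationality the image $\ell(\Gamma)$ could fail to be discrete and the induction would break down.
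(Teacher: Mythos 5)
Your proof is correct. The paper does not actually prove this lemma---it only cites \cite{Bruns2001}---and your argument (the set of irreducible elements is contained in every generating set, and generates $\Gamma$ by induction on a strictly positive integral functional $\ell \in \Lambda_\Gamma^* \cap \textup{int}(K_\Gamma^*)$) is precisely the standard proof from that reference, with the roles of positivity and rationality correctly identified.
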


This means that we can associate to any positive affine semigroup $\Gamma$ a unique (up to row and column permutations) nonnegative integer matrix in the following way. 

\begin{defn}
Let $\Gamma$ be a positive affine semigroup. Let $a_1,...,a_k$ and $b_1,...,b_l$ be the sets of minimal generators of $\Gamma$  and $\Gamma^*$, respectively. We define the \emph{slack matrix} of $\Gamma$ as the nonnegative integer matrix 
$S_{\Gamma} \in \ZZ_+^{k \times l}$ whose $(i,j)$-entry is $\langle a_i,b_j \rangle$.
\end{defn}

Note the generators of a positive affine semigroup $\Gamma$ must include generators of the rays of the cone $K_{\Gamma}$. Thus the slack matrix $S_{\Gamma}$ is a slack matrix of $K_{\Gamma}$ in the sense of slack matrices of convex cones (see \cite{GOUVEIA20132921}), possibly with redundant rows and columns.

\begin{ex} \label{ex:slackmatrixex} Let us revisit the affine group  $\Gamma$ generated by $\{(1,0),(1,1),(3/2,3/2)\}$, introduced in Example \ref{ex:affine}. It is simple to check this is indeed the minimal set of generators. Furthermore, by Example \ref{ex:affine_dual} we know that its dual is generated by 
$(0,2)$ and $(1,-1)$. So by the above definition its slack matrix is the $3 \times 2$ nonnegative integer matrix
$$S_{\Gamma}=\begin{bmatrix}
    0 & 1 \\
    2 & 0 \\
    3 & 0
\end{bmatrix}.$$
\end{ex}

By Lemma~\ref{lem:pos_mingenset}, the slack matrix of a positive affine semigroup is unique up to permutation of rows and columns. In what follows, we explore the deeper connections between an affine semigroup and its slack matrix. 

The first important fact is that the slack matrix provides a canonical embedding of an affine semigroup in the integer lattice. To state this more precisely we introduce two affine semigroups that can be associated to any integer matrix. Let $M \in \ZZ^{n \times m}$. Denote by $\Gamma_M^{\textup{row}} \subseteq \ZZ^m$ and $\Gamma_M^{\textup{col}} \subseteq \ZZ^m$, the semigroups spanned by all nonnegative integer combinations of the rows and columns of $M$, respectively.

\begin{lem} \label{lem:isomorphismslack}
Let $\Gamma$ be a positive affine semigroup, and let $S \in \ZZ_+^{k \times l}$ be its slack matrix. Then $\Gamma \cong \Gamma_S^{\textup{row}}$ and $\Gamma^* \cong \Gamma_S^{\textup{col}}$, where the isomorphisms are given by sending each generator of $\Gamma$ or $\Gamma^*$ to the corresponding  row or column, respectively, of the slack matrix.
\end{lem}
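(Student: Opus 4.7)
The plan is to construct the two claimed isomorphisms as restrictions of ordinary linear maps and then verify that each is a bijective semigroup homomorphism using the structural results in Lemma~\ref{lem:semigroup_dual_properties}.

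First I would define $\varphi \colon \RR^n \to \RR^l$ by $\varphi(x) = (\langle x, b_1\rangle, \ldots, \langle x, b_l\rangle)$, where $b_1,\ldots,b_l$ are the minimal generators of $\Gamma^*$. Since $\Gamma \subseteq K_\Gamma$ and each $b_j \in K_\Gamma^*$, the inner products are nonnegative integers, so $\varphi(\Gamma) \subseteq \ZZ_+^l$. By definition $\varphi(a_i)$ is the $i$-th row of $S$, and because $\varphi$ is $\ZZ_+$-linear (a semigroup homomorphism when restricted to $\Gamma$), the image of $\Gamma$ is exactly the subsemigroup generated by the rows of $S$, namely $\Gamma_S^{\textup{row}}$. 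This handles well-definedness and surjectivity essentially for free.

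The main work is injectivity, which I expect to be the one genuine obstacle. Since $\Gamma$ spans $\Lambda_\Gamma$ and $\varphi$ is linear, it suffices to show that the restriction $\varphi\big|_{\spa(\Lambda_\Gamma)}$ has trivial kernel. An element $x \in \spa(\Lambda_\Gamma)$ lies in the kernel precisely when $\langle x, b_j\rangle = 0$ for every $j$, i.e., when $x$ is orthogonal to $K_{\Gamma^*}$. By Lemma~\ref{lem:semigroup_dual_properties}(\ref{lem:pos_dual_cone}) and (\ref{lem:pos_dual_lattice}), $K_{\Gamma^*} = K_\Gamma^*$ and $\Lambda_{\Gamma^*} = \Lambda_\Gamma^*$, and positivity of $\Gamma$ ensures that $K_\Gamma^*$ is full-dimensional in $\spa(\Lambda_\Gamma^*) = \spa(\Lambda_\Gamma)$. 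Hence the only element of $\spa(\Lambda_\Gamma)$ orthogonal to every $b_j$ is $0$, establishing injectivity of $\varphi$ on $\Gamma$.

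For the second isomorphism, I would apply exactly the same construction with the roles of $\Gamma$ and $\Gamma^*$ swapped: the map $\psi(y) = (\langle a_1, y\rangle, \ldots, \langle a_k, y\rangle)$ sends $b_j$ to the $j$-th column of $S$, takes $\Gamma^*$ into $\ZZ_+^k$, and has image $\Gamma_S^{\textup{col}}$. Injectivity on $\Gamma^*$ follows by the symmetric argument, using that the $a_i$ generate the full-dimensional cone $K_\Gamma$ in $\spa(\Lambda_\Gamma) = \spa(\Lambda_{\Gamma^*})$, so that the only element of $\spa(\Lambda_{\Gamma^*})$ orthogonal to every $a_i$ is $0$. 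Both maps are semigroup isomorphisms by construction, and both send generators to the appropriate rows or columns of $S$, which is the desired conclusion.
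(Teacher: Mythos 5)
Your proof is correct and follows essentially the same route as the paper's: the map sending each generator $a_i$ to the $i$-th row of $S$ is a semigroup homomorphism onto $\Gamma_S^{\textup{row}}$, and injectivity comes down to positivity of $\Gamma$ (pointedness of $K_\Gamma$, equivalently full-dimensionality of $K_\Gamma^*$ in $\spa(\Lambda_\Gamma)$). The only cosmetic difference is that you define the map pointwise as $x \mapsto (\langle x, b_1\rangle,\ldots,\langle x,b_l\rangle)$, which makes well-definedness automatic, whereas the paper defines it on representations $\sum n_i a_i$ and verifies well-definedness as a separate step.
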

\begin{proof}
 Let $a_1,...,a_k$ and $b_1,...,b_l$ be the minimal generating sets for $\Gamma$ and $\Gamma^*$, respectively. Let $\varphi:\Gamma \rightarrow \ZZ^l$ be the map defined by
$$\varphi\left(\sum n_i a_i\right) = \sum n_i S_i$$
for all $n_1,...,n_k \in \ZZ_+$, where $S_i$ is the $i$-th row of $S$. We want to show that $\varphi$ is well-defined and injective.

We begin by showing that this map is well-defined. If $\sum n_i a_i = \sum m_i a_i$, then for $j=1,...,l$ we have
$$\left[\sum (n_i-m_i)S_i\right]_j= \sum (n_i-m_i)S_{ij}= \sum (n_i-m_i) \langle a_i, b_j \rangle = 
\langle \sum (n_i-m_i) a_i ,b_j\rangle =0.$$
Since the set of $b_j$ span the space, this implies $\sum n_i S_i=\sum m_i S_i$.

To show $\varphi$ is injective, we will show that for any integer vector $k$ such that $k^\top S=\sum k_i S_i =0$, we have $\sum k_i a_i=0$. If $k^\top S=0$ then, for each $j=1,...,l$, we must have $\langle \sum k_i a_i, b_j \rangle = 0$. This means that both $\sum k_i a_i$ and $-\sum k_i a_i$ are in $K_{\Gamma}$ as this is the dual of the cone spanned by the $b_j$. Since ${\Gamma}$ is positive, this cone is pointed, so we must have $\sum k_i a_i=0$ as intended. This concludes the proof of $\Gamma \cong \Gamma_S^{\textup{row}}$. 

The proof that  $\Gamma^* \cong \Gamma_S^{\textup{col}}$ is analogous with the roles of $a_i$ and $b_j$ exchanged.
\end{proof}

\begin{ex} Consider the affine semigroup $\Gamma$ of Example \ref{ex:slackmatrixex}, which is generated by $\{(1,0),(1,1),(3/2,3/2)\}$. Looking at its slack matrix, Lemma \ref{lem:isomorphismslack} states that $\Gamma$ is isomorphic to the affine semigroup generated by $\{(0,1),(2,0),(3,0)\}$. In fact we can see that this is simply the image of $\Gamma$ by the lattice isomorphism $(x,y) \rightarrow (x+y,x-y)$. 

Moreover, we see that $\Gamma^*$, which is generated by $\{(0,2),(1,-1)\}$, is isomorphic to the affine semigroup in $\ZZ^3$ generated by $\{(0,2,3), (1,0,0)\}$. The lattice map which sends $(x,y)$ to the integer point $(x,x+y,3(x+y)/2)$ gives an isomorphism between $\Gamma^*$ (see Example~\ref{ex:affsemigroupdual}) and $\ZZ_+^3$ intersected with the plane cutout by $3y=2z$.

\end{ex}

Lemma \ref{lem:isomorphismslack} shows that the slack matrix can be seen as a canonical embedding of the equivalence class of the affine semigroup up to isomorphism. In particular, it is clear that two affine semigroups have the same slack matrix if and only if they are isomorphic.

In fact, we can say more about this canonical embedding. The result of \cite[Theorem 1]{GOUVEIA20132921}, for slack matrices of convex cones in general, translates to
$$K_S^{\textup{row}}=\RR^l_+ \cap \textup{Row}(S),$$
 where $\textup{Row}(S)$ is the row space of $S$, and $K_S^{\textup{row}}$ is the cone generated by its rows.  Similarly, by symmetry, 
$$K_S^{\textup{col}}=\RR^k_+ \cap \textup{Col}(S),$$
  where $\textup{Col}(S)$ is the column space of $S$ and $K_S^{\textup{col}}$ is the cone generated by its columns. To these results about cones, we can add analogous results about lattices. To do this,  as we did with affine semigroups, we associate to an integer matrix $M \in \ZZ^{n \times m}$ two lattices:  $\Lambda_M^{\textup{row}} \subseteq \ZZ^m$ and $\Lambda_M^{\textup{col}} \subseteq \ZZ^n$, the lattices spanned by all integer combinations of the rows and columns of $M$, respectively.

  \begin{lem} \label{lem:LGC}
  Let $\Gamma$ be a positive affine semigroup with slack matrix $S\in\ZZ_+^{k\times l}$. Then
  \begin{enumerate}
      \item $\Lambda_S^{\textup{row}}=\ZZ^l \cap \textup{Row}(S),$
      \item  $\Lambda_S^{\textup{col}}=\ZZ^k \cap \textup{Col}(S).$
  \end{enumerate}
  \end{lem}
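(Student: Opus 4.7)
The plan is to argue that the semigroup isomorphism of Lemma~\ref{lem:isomorphismslack} extends to a linear isomorphism between the ambient real spans, and then to identify the preimage of $\ZZ^l\cap\textup{Row}(S)$ (resp.\ $\ZZ^k\cap\textup{Col}(S)$) under this isomorphism using the lattice duality established in Lemma~\ref{lem:semigroup_dual_properties}. The forward inclusions $\Lambda_S^{\textup{row}}\subseteq\ZZ^l\cap\textup{Row}(S)$ and $\Lambda_S^{\textup{col}}\subseteq\ZZ^k\cap\textup{Col}(S)$ are immediate because the rows (resp.\ columns) of $S$ already lie in $\ZZ^l\cap\textup{Row}(S)$ (resp.\ $\ZZ^k\cap\textup{Col}(S)$), so all the work lies in the reverse inclusions.

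For part (1), first I would note that the same calculation used in the proof of Lemma~\ref{lem:isomorphismslack} shows that the map $a_i\mapsto S_i$ extends linearly to a well-defined injective linear map $\Phi\colon\textup{span}(\Lambda_\Gamma)\to\textup{Row}(S)$: the identity $[\sum\lambda_i S_i]_j=\langle\sum\lambda_i a_i,b_j\rangle$ now with real coefficients $\lambda_i$ gives well-definedness, while positivity of $\Gamma$ (ensuring $K_{\Gamma^*}$ spans $\textup{span}(\Lambda_\Gamma)$) gives injectivity. Counting dimensions, $\Phi$ is a linear isomorphism onto $\textup{Row}(S)$.

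Now given $w\in\ZZ^l\cap\textup{Row}(S)$, let $v=\Phi^{-1}(w)\in\textup{span}(\Lambda_\Gamma)$; by construction $\langle v,b_j\rangle=w_j\in\ZZ$ for every $j$. Since the $b_j$ minimally generate $\Gamma^*$ they also generate $\Lambda_{\Gamma^*}$ as a $\ZZ$-module, so $v$ has integer inner product with every element of $\Lambda_{\Gamma^*}$. Using Lemma~\ref{lem:semigroup_dual_properties}\eqref{lem:pos_dual_lattice} this says $v\in\Lambda_{\Gamma^*}^*=(\Lambda_\Gamma^*)^*=\Lambda_\Gamma$ by the standard involution property of dual lattices. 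Writing $v=\sum n_i a_i$ with $n_i\in\ZZ$ and applying $\Phi$ gives $w=\sum n_i S_i\in\Lambda_S^{\textup{row}}$, which is the desired inclusion.

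Part (2) follows by the mirror argument: the assignment $b_j\mapsto S^j$ (the $j$-th column) extends to a linear isomorphism $\Psi\colon\textup{span}(\Lambda_{\Gamma^*})\to\textup{Col}(S)$ (well-definedness and injectivity come from the same pairing identity and from the fact that the $a_i$ span $\textup{span}(\Lambda_\Gamma)=\textup{span}(\Lambda_{\Gamma^*})$), and then for $w\in\ZZ^k\cap\textup{Col}(S)$ one takes $v=\Psi^{-1}(w)$, observes $\langle a_i,v\rangle=w_i\in\ZZ$, and concludes $v\in\Lambda_\Gamma^*=\Lambda_{\Gamma^*}$, so $w$ is an integer combination of columns of $S$. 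The only mildly delicate point -- and I expect this to be the main obstacle to presenting cleanly -- is keeping straight which span each object lives in and invoking the correct part of Lemma~\ref{lem:semigroup_dual_properties} to identify $\Lambda_{\Gamma^*}$ with $\Lambda_\Gamma^*$ so that the biduality $(\Lambda_\Gamma^*)^*=\Lambda_\Gamma$ can close the argument.
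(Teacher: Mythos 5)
Your proposal is correct and follows essentially the same route as the paper: express an element of $\ZZ^l\cap\textup{Row}(S)$ as a real combination $\sum\lambda_i S_i$, observe via the pairing identity that $\sum\lambda_i a_i$ takes integer values on the generators of $\Gamma^*$, conclude it lies in $\Lambda_{\Gamma^*}^*=\Lambda_\Gamma$ by Lemma~\ref{lem:semigroup_dual_properties}, and hence that the coefficients may be taken to be integers. The explicit isomorphism $\Phi$ you set up is a slightly more formal packaging of the same computation (the paper does not even need its injectivity), but it is accurate and changes nothing of substance.
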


  \begin{proof}
  We prove the statement for rows, and note that the proof for columns, (2), is analogous. 
  
  The inclusion $\Lambda_S^{\textup{row}} \subseteq \ZZ^l \cap \textup{Row}(S),$ is clear so we just need to prove the reverse. Let $x \in \ZZ^l \cap \textup{Row}(S)$. Then there exist real $\lambda_i$ such that
  $x=\sum_{i=1}^k \lambda_i S_i$, where $S_i$ is row $i$ of~$S$. This means that for every $j=1,\ldots,l$ we have
  $$x_j = \sum_{i=1}^k \lambda_i \langle a_i,b_j \rangle  = \langle \sum_{i=1}^k \lambda_i a_i, b_j \rangle$$
  where the $\{a_1,\ldots, a_k\}$ and $\{b_1,\ldots, b_l\}$ are the minimal generators of the affine semigroup~$\Gamma$ and its dual $\Gamma^*$, respectively. Since each $x_j\in\ZZ$, this
  means that the linear operator $\sum \lambda_i a_i$ takes integer values on every element of $\Gamma^*$;  therefore, it belongs to $\Lambda_{\Gamma^*}^*=\Lambda_{\Gamma}$ (see Lemma~\ref{lem:semigroup_dual_properties}) so it must be possible to write it as an integer combination of the generators. Hence we can assume $\lambda \in \ZZ^l$, and so $x \in \Lambda_S^{\textup{row}}$ as intended.
  \end{proof}

It turns out that these conditions offer a characterization of slack matrices.

\begin{thm} \label{thm:classification}
Let $S$ be a nonnegative $k \times l$ integer matrix. Then $S$ is the slack matrix of a positive affine semigroup if and only if the following conditions hold.
\begin{enumerate}
\item $K^{\textup{row}}_S=\RR^l_+ \cap \textup{Row}(S)$,  $\Lambda_S^{\textup{row}}=\ZZ^l \cap \textup{Row}(S)$ and the rows of $S$ are a minimal generating set for $\Gamma_S^{\textup{row}}$;
\item The columns of $S$ are a  minimal generating set for $\ZZ^k_+ \cap \textup{Col}(S)$.
\end{enumerate}
\end{thm}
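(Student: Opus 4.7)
The proof splits into two directions. For the forward direction, assume $S = S_\Gamma$ for a positive affine semigroup $\Gamma$. Condition (1) is essentially assembled from previous results: the cone equality is the statement from \cite{GOUVEIA20132921} recalled just before Lemma \ref{lem:LGC}, the lattice equality is Lemma \ref{lem:LGC}(1), and the minimality of the rows as generators of $\Gamma_S^{\textup{row}}$ follows from the isomorphism in Lemma \ref{lem:isomorphismslack} together with Lemma \ref{lem:pos_mingenset}. For condition (2), the key extra observation is that $\Gamma^* = \Lambda_\Gamma^* \cap K_\Gamma^*$ is \emph{normal} by construction, so via the isomorphism $\Gamma^* \cong \Gamma_S^{\textup{col}}$ from Lemma \ref{lem:isomorphismslack} the semigroup $\Gamma_S^{\textup{col}}$ is itself normal and hence equals $\Lambda_S^{\textup{col}} \cap K_S^{\textup{col}} = \ZZ^k_+ \cap \textup{Col}(S)$; the columns being minimal generators is again Lemma \ref{lem:pos_mingenset}.

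For the reverse direction, my plan is to take $\Gamma := \Gamma_S^{\textup{row}}$ as the candidate and verify that its slack matrix recovers $S$. Positivity is immediate since $K_\Gamma = K_S^{\textup{row}} \subseteq \RR_+^l$ is pointed, and by hypothesis (1) the rows $a_i = S_i$ are the minimal generators of $\Gamma$. The natural candidates for the minimal generators of $\Gamma^*$ are the coordinate functionals $b_j := e_j|_{\textup{Row}(S)}$, which pair with $a_i$ to give $\langle S_i, e_j \rangle = S_{ij}$ as desired; these lie in $\Gamma^*$ because $S$ is nonnegative (so $b_j \in K_\Gamma^*$) and $\Lambda_\Gamma \subseteq \ZZ^l$ (so $b_j \in \Lambda_\Gamma^*$).

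The main step, and the one I expect to be the delicate one, is to show these $b_j$ are indeed the minimal generators of $\Gamma^*$. I plan to use the linear map $\phi \mapsto S\phi$ from $\RR^l$ onto $\textup{Col}(S)$, which sends $e_j$ to the column $C_j$, and show that it induces a bijection between $\Gamma^*$ (viewed inside the dual of $\textup{Row}(S)$) and $\ZZ_+^k \cap \textup{Col}(S)$. One containment is immediate from the definition of $\Gamma^*$ applied to the rows: $(S\phi)_i = \phi(S_i)$ is nonnegative by membership in $K_\Gamma^*$ and integer by membership in $\Lambda_\Gamma^*$, since $S_i \in \Lambda_\Gamma$. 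The reverse containment, given $y = S\phi \in \ZZ_+^k \cap \textup{Col}(S)$, requires checking that $\phi$ takes integer values on every $v \in \Lambda_\Gamma = \ZZ^l \cap \textup{Row}(S)$; here condition (1)'s lattice equality $\Lambda_S^{\textup{row}} = \ZZ^l \cap \textup{Row}(S)$ is essential, since it lets us write $v$ as an \emph{integer} combination of rows, whence $\phi(v)$ becomes an integer combination of the integer coordinates of $y$. Once the bijection is set up, condition (2) identifies $C_1,\ldots,C_l$ as the minimal generators of $\ZZ_+^k \cap \textup{Col}(S)$, which pull back to $b_1,\ldots,b_l$ as minimal generators of $\Gamma^*$. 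The slack matrix of $\Gamma$ is then precisely $S$, completing the argument.
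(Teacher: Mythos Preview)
Your forward direction is essentially identical to the paper's. For the reverse direction, however, you take a genuinely different and more direct route: you set $\Gamma = \Gamma_S^{\textup{row}}$ and work entirely inside $\RR^l$, whereas the paper first chooses an integer factorization $S = V^\top W$ through $\ZZ^d$ (with $d = \rank S$) and shows that $S$ is the slack matrix of $\Gamma_V^{\textup{col}}$ by proving separately that $K_W^{\textup{col}} = (K_V^{\textup{col}})^*$ and $\Lambda_V^{\textup{col}} = \Lambda_W^{\textup{col}} = \ZZ^d$. Your isomorphism $\Gamma^* \to \ZZ_+^k \cap \textup{Col}(S)$, $\phi \mapsto S\phi$, packages both the cone and lattice verifications into one step and avoids the factorization entirely, which is a pleasant simplification.

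One point deserves correction. You claim that the lattice equality $\Lambda_S^{\textup{row}} = \ZZ^l \cap \textup{Row}(S)$ from condition~(1) is ``essential'' to write $v \in \Lambda_\Gamma$ as an integer combination of rows. But for your choice $\Gamma = \Gamma_S^{\textup{row}}$ one has $\Lambda_\Gamma = \Lambda_S^{\textup{row}}$ \emph{by definition}, so every $v \in \Lambda_\Gamma$ is already an integer combination of rows without invoking any hypothesis. Your argument for the bijection therefore uses only the row-minimality from (1) together with condition~(2); the row cone and row lattice equalities play no role in your reverse implication. (They are of course still consequences of the forward direction, so the theorem as stated is correct.) By contrast, the paper's factorization approach does use the row lattice equality to establish $\Lambda_V^{\textup{col}} = \ZZ^d$, so this is a place where your route genuinely economizes.
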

\begin{proof}

When $S$ is a slack matrix, the conditions in (1) hold by previous results: for the cone by \cite[Theorem 1]{GOUVEIA20132921}; for the lattice by Lemma \ref{lem:LGC}; and for the rows being a minimal generating set using the isomorphism of Lemma~\ref{lem:isomorphismslack}.
For (2), note that the same conditions proven for the rows in (1) also were proven for the columns. Note then that $\Gamma_S^{\textup{col}} \cong \Gamma^*$, hence it is normal. This means that it equals $K^{\textup{col}}_S \cap \Gamma^{\textup{col}}_S = \ZZ^k_+ \cap \textup{Col}(S)$. 

We will now suppose now that $S$ verifies the conditions and show that it is indeed a slack matrix.
Suppose $S$ has rank $d$ and let $S=V^\top W$ be an integer factorization through $\ZZ^d$. We will show that $S$ is the slack matrix of the positive affine semigroup $\Gamma_V^{\textup{col}}$. 

Since the rows and columns of $S$ are minimal generating sets for $\Gamma_S^{\textup{row}}$ and $\Gamma_S^{\textup{col}}$, respectively, both $\Gamma_V^{\textup{col}}$ and $\Gamma_W^{\textup{col}}$ must be minimally spanned by the columns of $V$ and $W$, respectively. To see this, notice that if there is a column of $V$ (or $W$) that can be written as a nonnegative combination of its other columns, then the corresponding row (or column) of $S$ can also be written in the same way. Therefore,  it is enough to show that $(\Gamma_V^{\textup{col}})^*= \Gamma_W^{\textup{col}}$.

We first need  to prove that the cones spanned by the columns of $W$ and $V$ are dual. This is immediate from the general theory of slack matrices of cones, but we include the argument here for completeness. It is clear that $K_W^{\textup{col}} \subseteq (K_V^{\textup{col}})^*$ since all inner products between generators are nonnegative. It remains to show the reverse inclusion. If $w \in (K_V^{\textup{col}})^*$, then  we have $V^\top w \in \RR^k_+$. Furthermore, $V^\top w$ is in the row space of $V$ which is the same as the column space of $S$. But then our first condition implies $V^\top w \in K_S^{\textup{col}}$, so $V^\top w=V^\top W \lambda$ for $\lambda \geq 0$. This implies $w=W \lambda$, since $V$ has rank $d$, hence we conclude $w \in K_W^{\textup{col}}$ as intended.

Now we need to show that $\Lambda_V^{\textup{col}}$ and $\Lambda_W^{\textup{col}}$ are also dual to each other. We prove this by showing that  $\Lambda_V^{\textup{col}} = \Lambda_W^{\textup{col}} = \ZZ^d$. 
Take any $y \in \ZZ^d$ and note that $V^\top y \in \ZZ^k \cap \textup{Col}(S)$, which means by our condition that $V^\top y \in \Lambda_S^{\textup{col}}$, which means there exist $z \in \ZZ^l$ such that $V^\top y=V^\top Wz$. Since $V$ has rank $d$, this implies $y=Wz$ and so $y \in \Lambda_W^{\textup{col}}$, as intended. Similarly we can show   $\Lambda_V^{\textup{col}} = \ZZ^d$ so we have the intended result.

We have shown $\Gamma_W^{\textup{col}} \subseteq(\Gamma_V^{\textup{col}})^* = \ZZ^d \cap K_W^{\textup{col}}$. 
To finish the proof just note that if $w \in  \ZZ^d \cap K_W^{\textup{col}}$ then $V^\top w \in \ZZ^k_+ \cap \textup{Col}(S)$ hence, by our conditions, it must be a nonnegative combination of the columns of $S$, i.e., there are $\lambda \in \ZZ_+^l$ such that $V^\top w=V^\top W \lambda$. Since $w \in \textup{Col}(W)$, this implies $w=W \lambda$,
so $w \in \Gamma_W^{\textup{col}}$, completing the proof.
\end{proof}

\begin{obs}
    The condition on the cone $K^{\textup{row}}_S$ in Theorem \ref{thm:classification} (1) is what in \cite{GOUVEIA20132921} is called the \emph{row cone generating condition} (RCGC). The equivalent condition for columns that is implied by (2) is known as the \emph{column cone generating condition} (CCGC). They are in fact equivalent.

    By analogy, one could call the condition on the lattice in (1) the \emph{row lattice generating condition} (RLGC) and the equivalent condition for columns, implied by (2), the \emph{column lattice generating condition} (CLGC). 
\end{obs}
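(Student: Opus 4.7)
The plan is to translate both RCGC and CCGC into cone duality statements in a lower-dimensional ambient space via a rank factorization of $S$, after which the equivalence becomes an immediate consequence of the involutive property of taking duals. Specifically, I would set $d = \rank(S)$ and pick a factorization $S = V^\top W$ with $V \in \RR^{d \times k}$ and $W \in \RR^{d \times l}$ both of rank $d$. Writing $v_i$ and $w_j$ for the columns of $V$ and $W$, so that $S_{ij} = \langle v_i, w_j\rangle$, the rows of $S$ are exactly $v_i^\top W$ and the columns are $V^\top w_j$.

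The key observation is that the map $W^\top \colon \RR^d \to \RR^l$ is injective (since $W$ has rank $d$), identifies $\RR^d$ with $\textup{Row}(S)$, and sends $\cone(v_1,\ldots,v_k) = K_V^{\textup{col}}$ onto $K_S^{\textup{row}}$ and the set $\{v : \langle v, w_j\rangle \geq 0 \text{ for all } j\} = (K_W^{\textup{col}})^*$ onto $\RR^l_+ \cap \textup{Row}(S)$. Pulling RCGC back along $W^\top$ therefore yields the equivalent condition
\[
K_V^{\textup{col}} \;=\; (K_W^{\textup{col}})^*.
\]
By symmetry (swapping the roles of rows and columns, and of $V$ and $W$), CCGC is equivalent to $K_W^{\textup{col}} = (K_V^{\textup{col}})^*$. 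Since $K_V^{\textup{col}}$ and $K_W^{\textup{col}}$ are polyhedral, hence closed convex, cones, taking duals on either side of either of these equations produces the other, establishing RCGC $\Leftrightarrow$ CCGC.

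The analogous equivalence RLGC $\Leftrightarrow$ CLGC (implicit in the observation) follows by exactly the same strategy, after choosing an integer rank factorization $S = V^\top W$, which exists since $S$ has integer entries (e.g.\ via Smith normal form). The injection $W^\top$ reduces RLGC to $\Lambda_V^{\textup{col}} = (\Lambda_W^{\textup{col}})^*$ and CLGC to $\Lambda_W^{\textup{col}} = (\Lambda_V^{\textup{col}})^*$, and both $\Lambda_V^{\textup{col}}$ and $\Lambda_W^{\textup{col}}$ are full-rank lattices in $\RR^d$ because $V$ and $W$ have rank $d$ over $\RR$. The conclusion then follows from the involutive property $\Lambda^{**} = \Lambda$ of lattice duality.

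The main obstacle I anticipate is bookkeeping rather than any deep difficulty: one must verify that $W^\top$ really does identify each of the combinatorial pieces (cone, lattice, nonnegative orthant intersection) with its claimed preimage, and in the lattice case one must be sure the integer factorization is clean enough that pullbacks land in the intended lattices. These checks are routine once the setup is in place, but they demand attention to which objects live in $\RR^d$ versus $\RR^l$ versus $\RR^k$, and to the fact that the ranks over $\RR$ suffice to make the relevant $\ZZ$-spans full rank.
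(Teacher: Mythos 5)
Your argument is correct, and it supplies a proof where the paper gives none: the Observation simply asserts the equivalence of RCGC and CCGC with a pointer to \cite{GOUVEIA20132921}, and names the lattice analogues without proving anything about them. Your route --- pull both conditions back through a rank factorization $S=V^\top W$ so that RCGC becomes $K_V^{\textup{col}}=(K_W^{\textup{col}})^*$ and CCGC becomes $K_W^{\textup{col}}=(K_V^{\textup{col}})^*$, then invoke bipolarity of closed convex cones --- is essentially the argument used in the cited reference, so for the cone half you are reconstructing the standard proof rather than finding a new one. The genuinely useful addition is the lattice half: the same bookkeeping with an integer rank factorization (which exists by Smith normal form) reduces RLGC and CLGC to $\Lambda_V^{\textup{col}}=(\Lambda_W^{\textup{col}})^*$ and $\Lambda_W^{\textup{col}}=(\Lambda_V^{\textup{col}})^*$, and $\Lambda^{**}=\Lambda$ for full-rank lattices closes the loop; this mirrors exactly what Theorem~\ref{thm:classification}'s proof does implicitly when it shows $\Lambda_V^{\textup{col}}=\Lambda_W^{\textup{col}}=\ZZ^d$. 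The checks you flag as ``bookkeeping'' are indeed the only delicate points: $\operatorname{image}(W^\top)=\textup{Row}(S)$ requires $V$ to have full rank $d$ (so the rows of $S$ span all of $\textup{Row}(W)$), the duals must be taken inside $\RR^d$ where both cones and both lattices are full-dimensional, and bipolarity needs the cones to be finitely generated (hence closed) and the lattices to be full rank --- all of which hold in your setup.
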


It is possible to rewrite Theorem \ref{thm:classification} in a slightly more succinct way.

\begin{cor}
Let $S$ be a nonnegative $k \times l$ integer matrix. Then, after erasing any row or column that is a nonnegative combination of the others, $S$ is the slack matrix of a positive affine semigroup if and only if $\overline{\Gamma_S^{\textup{row}}}=\ZZ^l_+ \cap \textup{Row}(S)$ and  ${\Gamma_S^{\textup{col}}}=\ZZ^k_+ \cap \textup{Col}(S)$.
\end{cor}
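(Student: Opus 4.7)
The plan is to show that the conditions of the corollary are equivalent, under the hypothesis that redundant rows and columns have been erased, to the conditions of Theorem \ref{thm:classification}. Erasing every row that is a nonnegative integer combination of the others is exactly the statement that the remaining rows form a minimal generating set for $\Gamma_S^{\textup{row}}$, and similarly for columns, so the minimality portions of conditions (1) and (2) of Theorem \ref{thm:classification} are already built into the hypothesis. It then remains to establish: (a) that $\Gamma_S^{\textup{col}} = \ZZ^k_+ \cap \textup{Col}(S)$ is equivalent to the remaining content of condition (2), and (b) that $\overline{\Gamma_S^{\textup{row}}} = \ZZ^l_+ \cap \textup{Row}(S)$ is equivalent to the conjunction of the cone equality $K_S^{\textup{row}} = \RR^l_+ \cap \textup{Row}(S)$ and the lattice equality $\Lambda_S^{\textup{row}} = \ZZ^l \cap \textup{Row}(S)$.

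Part (a) is essentially tautological: once the columns minimally generate $\Gamma_S^{\textup{col}}$, the remaining content of condition (2) is precisely $\Gamma_S^{\textup{col}} = \ZZ^k_+ \cap \textup{Col}(S)$. For part (b), the forward direction is also direct: by definition of normalization, $\overline{\Gamma_S^{\textup{row}}} = \Lambda_S^{\textup{row}} \cap K_S^{\textup{row}}$, and intersecting the two assumed equalities gives
\[
\overline{\Gamma_S^{\textup{row}}} = (\ZZ^l \cap \textup{Row}(S)) \cap (\RR^l_+ \cap \textup{Row}(S)) = \ZZ^l_+ \cap \textup{Row}(S).
\]

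The core of the argument is the reverse direction of (b), where from the single intersection equality I must recover the cone and the lattice equalities separately. For the cone, I would observe that the rows of $S$ lie in $\overline{\Gamma_S^{\textup{row}}}$ and generate the same cone as $\Gamma_S^{\textup{row}}$, so $K_S^{\textup{row}} = \cone(\overline{\Gamma_S^{\textup{row}}}) = \cone(\ZZ^l_+ \cap \textup{Row}(S))$; since $\RR^l_+ \cap \textup{Row}(S)$ is a rational polyhedral cone, its integer points generate it, so this equals $\RR^l_+ \cap \textup{Row}(S)$. For the lattice, the inclusion $\Lambda_S^{\textup{row}} \subseteq \ZZ^l \cap \textup{Row}(S)$ is immediate; for the reverse, I would pick an integer point $y$ in the relative interior of $K_S^{\textup{row}}$ (which exists because the cone is full-dimensional in the rational subspace $\textup{Row}(S)$) and use a shift trick: for any $x \in \ZZ^l \cap \textup{Row}(S)$ and $N$ sufficiently large, both $Ny$ and $Ny+x$ lie in $\ZZ^l_+ \cap \textup{Row}(S) = \overline{\Gamma_S^{\textup{row}}} \subseteq \Lambda_S^{\textup{row}}$, so their difference $x$ belongs to $\Lambda_S^{\textup{row}}$.

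The delicate step I expect to require the most care is this shift argument, specifically verifying the existence of an integer point in the relative interior of $K_S^{\textup{row}}$ and checking that $Ny+x$ eventually lands in the interior. Both are consequences of standard facts about rational polyhedral cones (rational interior points can be scaled to integer interior points, and an open ball around $Ny$ scales linearly with $N$), so once those are invoked the corollary follows by combining the equivalences above with the minimality reformulation.
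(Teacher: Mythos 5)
Your proposal is correct, and it follows the route the paper clearly intends: the corollary is stated in the paper without proof as a direct rewriting of Theorem~\ref{thm:classification}, and your argument supplies exactly the missing equivalence, with the only genuinely nontrivial step being the recovery of the separate cone and lattice conditions from the single equality $\overline{\Gamma_S^{\textup{row}}}=\ZZ^l_+\cap\textup{Row}(S)$, which your shift argument handles properly. No gaps.
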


\section{Lifts of affine semigroups}
\label{sec:lift}

One of the main motivations for the study of slack matrices for general polyhedral cones is to study their lifts, i.e., ways of representing them as projections of higher dimensional polyhedral cones. It is natural to ask, then, whether this connection extends to affine semigroups---the discrete analog for polyhedral cones---and their slack matrices.

\begin{defn} \label{def:lift}
Let $\Gamma$ be a positive affine semigroup. A {\em lift} of $\Gamma$ is another positive affine semigroup $\lift{\Gamma}$ together with an affine semigroup homomorphism $\iota : \Gamma \rightarrow \lift{\Gamma}$ such that the adjoint map verifies $\iota^*(\lift{\Gamma}^*)=\Gamma^*$. The {\em size} of the lift is the size of the minimal generating set of $\lift{\Gamma}$. 
\end{defn}

Note that the name \emph{lift}, given by analogy to the convex cone case, has a slightly different interpretation in the case of affine semigroups. In convex cones, a lift of a cone $K$ is a cone $\lift{K}$ that linearly projects to it. This implies that the adjoint sends the dual cone $K^*$ into the dual cone of the lift $\lift{K}^*$. In the convex cone case, because the role of the primal and dual cones are interchangeable, this is not very important, however, in the affine semigroup case we lose that symmetry, and the role of $\Gamma$ is analogous to that of $K^*$ in the above description. We are taking a somewhat dual view of what a lift is in this paper, but keeping the name.

It is not hard to check that lifts are always injective. To see this, just note that for any two distinct elements $x$, $y$ of $\Gamma$ there is an element of the dual that gives them different evaluations. This element will be the pullback of some element in the dual of $\lift{\Gamma}$ that, by definition, will have different values at $\iota(x)$ and $\iota(y)$ so they must also be distinct. The simplest lifts one can make, in fact, are mere inclusions.

\begin{ex}
Consider the affine semigroup $\Gamma$ generated by $(1,0), (1,1)$ and $(0,2)$. This semigroup is all of $\ZZ^2_+$ except the elements $(0,k)$ for $k$ odd. If we take $\lift{\Gamma}=\ZZ^2_+$, the inclusion map from $\Gamma$ to $\lift{\Gamma}$ is a lift. This is clear since $\Gamma^*=\lift{\Gamma}^*$, and the pullback map is simply the identity.
\end{ex}

It is possible, however, to define more interesting lifts. As in the convex case, one can increase the dimension to find smaller lifts.

\begin{ex}\label{ex:lift}
Consider the normal affine semigroup $$\Gamma=\{(x,y) \in \ZZ^2 \, | \, 0 \leq y \leq \frac{12}{5} x  \}.$$
It has five generators and is pictured with its dual in Figure \ref{fig:slackmatrixsemigroups}. 

\begin{figure}
\includegraphics[width=0.35\linewidth]{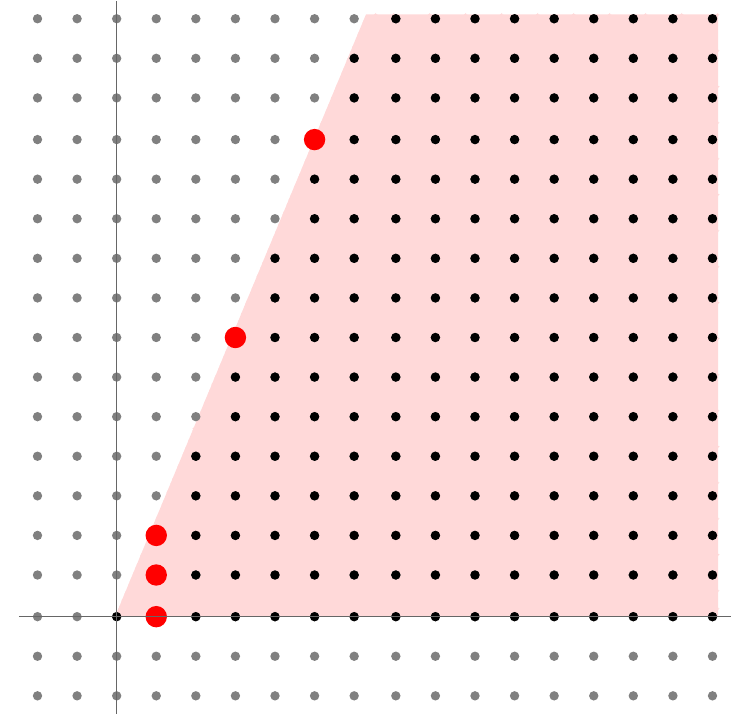}
\hspace{2cm}
\includegraphics[width=0.35\linewidth]{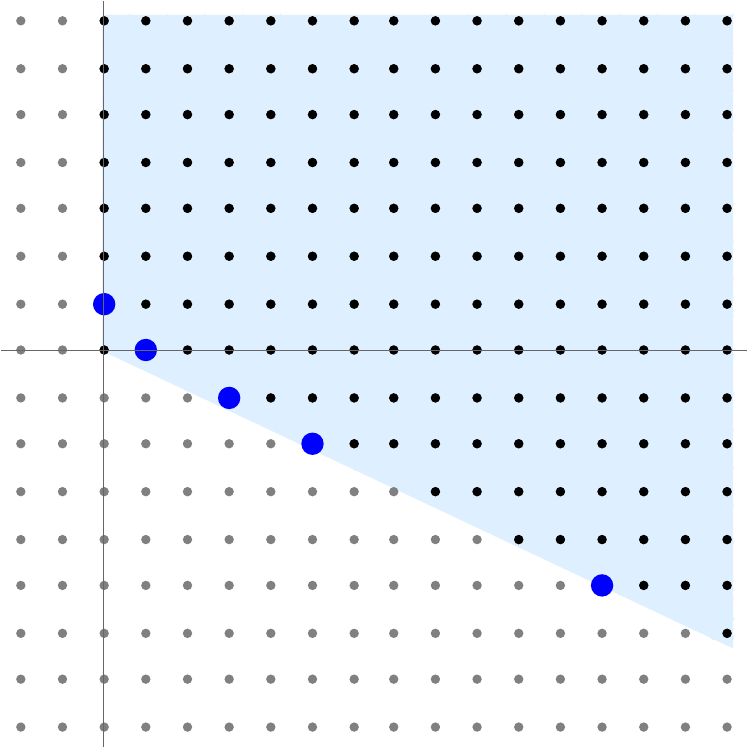}

\caption{Affine semigroup $\Gamma$ of Example~\ref{ex:lift} and its dual, with generators highlighted.} \label{fig:slackmatrixsemigroups}
\end{figure}

We can't have a lift with less than $5$ generators with dimension $2$. However, if we are allowed to increase the dimension, we can consider the affine semigroup $\lift{\Gamma} \subseteq \ZZ^3$ generated by the four elements $(1,0,0)$, $(6,0,1)$, $(0,2,1)$ and $(0,1,1)$.  One can check that the semigroup  $\lift{\Gamma}$ is in fact normal, that this is its minimal set of generators, and that the map $\iota:(x,y)\rightarrow (12,1,3)x-(5,0,1)y$ is a homomorphism from $\Gamma$ to $\lift{\Gamma}$. In fact, this induces an isomorphism between $\Gamma$ and the subsemigroup of $\lift{\Gamma}$ cut out by the plane spanned by $(12,1,3)$ and $(5,0,1)$, as can be seen in Figure \ref{fig:lift}.

\begin{figure}
\centering
\includegraphics[width=0.45\linewidth]{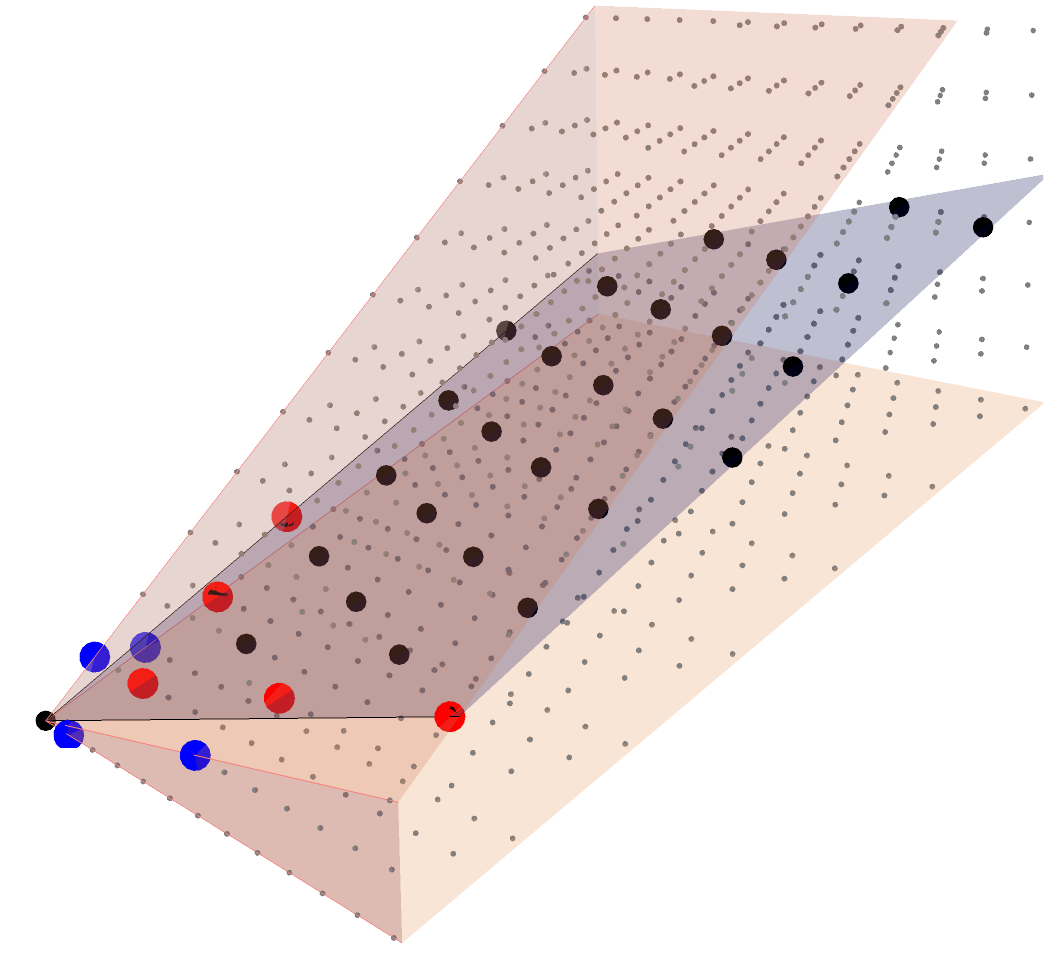}

\caption{Lift $\lift{\Gamma}$ of Example~\ref{ex:lift} with its generators highlighted in blue. The image of $\Gamma$ is highlighted in black with its generators highlighted in red. } \label{fig:lift}
\end{figure}

This map is in fact a lift. To see this one has to show that all the generators of $\Gamma^*$ can be obtained as pullbacks of elements in $\lift{\Gamma}^*$. For instance, the element $(0,1) \in \Gamma^*$ is simply the pullback of $(1,6,-6)$, since for $(x,y)\in\Gamma$
$$\langle (1,6,-6), \iota(x,y) \rangle = \langle (1,6,-6), (12x-5y,x,3x-y) \rangle = y = \langle (0,1), (x,y) \rangle. $$
To check $(1,6,-6) \in \lift{\Gamma}^*$ we just need to check its inner products with the four generators of $\lift{\Gamma}$ are nonnegative integers. Similarly $(1,0)$, $(3,-1)$, $(5,-2)$ and $(12,-5)$ are the pullbacks of $(0,1,0)$, $(0,0,1)$, $(0,-1,2)$ and $(1,0,0)$ respectively, and all of those lie in $\lift{\Gamma}^*$ so this is indeed a lift of $\Gamma$ with size $4$.
\end{ex}

In this case, we saw, as in Figure \ref{fig:lift}, that this lift is representing $\Gamma$ as a linear slice of $\lift{\Gamma}$, similar to the usual notion of lift in the convex cone case. In the next result we can see that this is not a particularity of this example.

\begin{thm}
If the affine semigroup $\lift{\Gamma}$ is a lift of the normal affine semigroup $\Gamma$ by some homomorphism $\iota$ then 
$\Gamma \cong \lift{\Gamma} \cap L$ where $L$ is a linear space such that $K_{\lift{\Gamma} \cap L}=K_{\lift{\Gamma}} \cap L$.
\end{thm}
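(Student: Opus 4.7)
My plan is to extend $\iota$ to an $\RR$-linear map on $\spa(\Lambda_\Gamma)$ (this is well-defined and $\ZZ$-linear on $\Lambda_\Gamma = \Gamma - \Gamma$ since $\iota$ is additive on $\Gamma$), to take $L$ to be the image of this extension, and to show that the extension, still called $\iota$, identifies $\Gamma$ with $\lift{\Gamma}\cap L$. Since lifts are injective (as observed right after Definition~\ref{def:lift}), the extension is injective as well, so $\iota$ is automatically a semigroup isomorphism onto its image $\iota(\Gamma)$. The theorem therefore reduces to establishing (a) $\iota(\Gamma) = \lift{\Gamma}\cap L$, and (b) $K_{\lift{\Gamma}\cap L} = K_{\lift{\Gamma}}\cap L$.

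For the nontrivial inclusion in (a), take $\tilde{x}\in\lift{\Gamma}\cap L$ and write $\tilde{x} = \iota(y)$ for some unique $y\in\spa(\Lambda_\Gamma)$. By normality of $\Gamma$, it suffices to show $y \in K_\Gamma \cap \Lambda_\Gamma$. The cone membership is obtained by transporting inner products across the adjoint: for any $b\in\Gamma^*$ the hypothesis $\iota^*(\lift{\Gamma}^*) = \Gamma^*$ produces $\tilde{b}\in\lift{\Gamma}^*$ with $\iota^*(\tilde{b}) = b$, and then $\langle b, y\rangle = \langle \tilde{b}, \tilde{x}\rangle \geq 0$ because $\tilde{b}\in K_{\lift{\Gamma}}^*$ and $\tilde{x}\in K_{\lift{\Gamma}}$. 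Hence $y\in K_{\Gamma^*}^* = K_\Gamma$ by Lemma~\ref{lem:semigroup_dual_properties}. For the lattice condition, I invoke Lemma~\ref{lem:semigroup_dual_properties} to identify $\Lambda_\Gamma^* = \Lambda_{\Gamma^*}$, write a general $b\in\Lambda_{\Gamma^*}$ as $b_1 - b_2$ with $b_i\in\Gamma^*$ (possible because $\Lambda_{\Gamma^*} = \Gamma^* - \Gamma^*$ by definition), and apply the same pullback trick: each $\langle b_i, y\rangle = \langle\tilde{b}_i, \tilde{x}\rangle\in\ZZ$, since $\tilde{b}_i\in\Lambda_{\lift{\Gamma}}^*$ and $\tilde{x}\in\Lambda_{\lift{\Gamma}}$. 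Hence $\langle b, y\rangle\in\ZZ$ for every $b\in\Lambda_\Gamma^*$, which forces $y\in\Lambda_\Gamma$.

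Part (b) follows from a parallel and slightly simpler argument. The inclusion $K_{\lift{\Gamma}\cap L}\subseteq K_{\lift{\Gamma}}\cap L$ is immediate. For the reverse, write any $v\in K_{\lift{\Gamma}}\cap L$ as $v = \iota(w)$, repeat the cone half of part (a) to conclude $w\in K_\Gamma$, and then express $w$ as a nonnegative combination of elements of $\Gamma$; applying $\iota$ displays $v$ as a nonnegative combination of elements of $\iota(\Gamma) = \lift{\Gamma}\cap L$. The one step that I expect to require genuine care is the lattice part of (a): the hypothesis $\iota^*(\lift{\Gamma}^*) = \Gamma^*$ only asserts surjectivity of the adjoint at the level of the dual \emph{semigroup}, not of the dual lattice, and the trick of writing dual lattice elements as differences of dual semigroup elements is exactly what bridges that gap.
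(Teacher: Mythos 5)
Your proposal is correct and follows essentially the same route as the paper's proof: take $L$ to be the span of $\iota(\Gamma)$, write an element of $\lift{\Gamma}\cap L$ as $\iota(y)$ with $y\in\spa(\Lambda_\Gamma)$, and use the pullback of dual elements guaranteed by the lift property to show $y\in\overline{\Gamma}=\Gamma$. The only difference is presentational: the paper verifies $\langle b,y\rangle\in\ZZ_+$ for all $b\in\Gamma^*$ and concludes $y\in(\Gamma^*)^*=\overline{\Gamma}$ in one step, whereas you unpack this into the cone condition and the lattice condition (via $\Lambda_{\Gamma^*}=\Gamma^*-\Gamma^*$) separately, and you also spell out the cone equality $K_{\lift{\Gamma}\cap L}=K_{\lift{\Gamma}}\cap L$ that the paper dispatches with ``the same argument as before.''
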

\begin{proof}

Let $\lift{\Gamma}$ be a lift of $\Gamma$ by $\iota$.
We saw that $\iota$ is actually an injective map from $\Gamma$ to $\lift{\Gamma} \cap \spa(\iota(\Gamma))$. 
We claim that is is in fact a bijection. 

Let 
$\overline{x}=
\lift{\Gamma}\cap \spa(\iota(\Gamma))$. We wish to show that $\overline{x}$ is in the image of $\iota$. Let $a_1,\ldots a_n$ be the generators of $\Gamma$. Then $\overline{x}=\sum \alpha_i \iota(a_i)$ for some real coefficients $ \alpha_i$. Now consider $x=\sum \alpha_i a_i\in\spa(\Gamma)$. Let $b \in \Gamma^*$, then we have that $b = \iota^*(\overline{b})$ for some $\overline{b}\in\lift{\Gamma}^*$, so 
$$\langle b, x \rangle = \sum \alpha_i \langle b, a_i \rangle =  \sum \alpha_i \langle \overline{b}, \iota(a_i) \rangle = \langle\overline{b},\overline{x}\rangle \in \ZZ_+.$$
This means that $x \in (\Gamma^{*})^*=\overline{\Gamma}$. Since $\Gamma$ normal, this means $x \in \Gamma$, and so $\overline{x}=\iota(x) \in \iota(\Gamma)$ as stated. 
Extending $\iota$ in the usual way, $K_{\iota(\Gamma)} = \iota(K_\Gamma) $ and thus we are done by the same argument as before. 
\end{proof}

Note that, unlike in the convex case, the reverse of this statement is not true. Having $\Gamma \cong \lift{\Gamma} \cap L$ and $K_{\lift{\Gamma} \cap L}=K_{\lift{\Gamma}} \cap L$ is not enough to make $\lift{\Gamma}$ a lift of $\Gamma$. We need every homomorphism from $\lift{\Gamma} \cap L$ to $\ZZ_+$ to be obtainable as a restriction of an homomorphism from $\lift{\Gamma}$ to $\ZZ_+$, which is clearly a very restrictive condition.

The seminal result on lifts of polyhedral cones, is the theorem of Yannakakis \cite{Yann} that relates lifts of cones with nonnegative factorizations of their slack matrices. In the context of slack matrices of affine semigroups, we will be interested in \emph{nonnegative integer factorizations}. 

\begin{defn}\label{def:nnintrank}
Given a nonnegative integer matrix $A$, a \emph{nonnegative integer factorization} of $A$ is a factorization of it as $B^\top C$ where $B$ and $C$ are nonnegative integer matrices. The number of columns of $B$ (and $C$) is said to be the \emph{size} of the factorization. The smallest size among all nonnegative integer factorizations of $A$ is said to be the \emph{nonnegative integer rank} of $A$, denoted $\rankzn(A)$.
\end{defn}

The study of the nonnegative rank has long been a topic of interest in mathematics \cite{gregory1983semiring,COHEN93,berman1994nonnegative}. Nonnegative factorizations of matrices have a wide range of applications, and a good entry point to the large body of literature on the subject is the survey~\cite{gillis2020nonnegative}. 

The restriction of the factorizations to nonnegative integer matrices has also been studied in the combinatorial and linear algebra communities to a certain extent. It is a special case of a semiring rank or factorization rank, and as such it has been studied for example in \cite{gregory1983semiring,beasley1995rank}. However, most of the research seems to focus on either identifying classes of matrices for which certain ranks coincide (for example see \cite{beasley1995rank,BEASLEY198833,seok2006factor}) or restricting attention to $\{0,1\}$-matrices (for instance in \cite{GREGORY199173,doherty1999biclique,hefner1990minimum}), where it can be seen as the bipartite clique partition number of a bipartite graph. We will further explore the notion of  nonnegative integer rank in Section \ref{sec:nn_int_rank} but for now the definition is enough to state and prove a ``Yannakakis-type Theorem'' for affine semigroups.

 \begin{thm}[Yannakakis for affine semigroups] \label{thm:YanSemigroups}
 Let $\Gamma$ be a positive affine semigroup. The minimal size of a lift of $\Gamma$ equals the nonnegative integer rank of its slack matrix.
 \end{thm}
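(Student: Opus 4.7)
The plan is to prove the equality by showing both inequalities separately. For the direction ``min lift size $\geq \rankzn(S_\Gamma)$'', I would start with a lift $\iota: \Gamma \to \lift{\Gamma}$ of size $k$ and extract a nonnegative integer factorization of $S_\Gamma$ of inner dimension $k$. By Lemma \ref{lem:pos_mingenset}, $\lift{\Gamma}$ has unique minimal generators $\lift{a}_1, \ldots, \lift{a}_k$, so I can expand each $\iota(a_i) = \sum_r U_{ir} \lift{a}_r$ with $U \in \ZZ_+^{m \times k}$. For each generator $b_j$ of $\Gamma^*$, the definition of a lift provides $\lift{b}_j \in \lift{\Gamma}^*$ with $\iota^*(\lift{b}_j) = b_j$; set $V_{rj} := \lift{b}_j(\lift{a}_r) \in \ZZ_+$. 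A short computation via $\lift{b}_j(\iota(a_i))$ gives $(UV)_{ij} = \langle a_i, b_j\rangle = (S_\Gamma)_{ij}$, so $\rankzn(S_\Gamma) \leq k$.

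For the converse, I would construct a lift of size at most $k$ from a given factorization $S_\Gamma = UV$ with $U \in \ZZ_+^{m \times k}$, $V \in \ZZ_+^{k \times n}$. Set $\lift{g}_r := V_{r,:}^\top \in \ZZ_+^n$ and let $\lift{\Gamma}$ be the subsemigroup of $\ZZ_+^n$ generated by $\lift{g}_1, \ldots, \lift{g}_k$; it is automatically positive and has at most $k$ minimal generators. Define $\iota: \Gamma \to \lift{\Gamma}$ by $\iota(a_i) := (S_\Gamma)_{i,:}^\top = \sum_r U_{ir} \lift{g}_r$, whose well-definedness as a semigroup homomorphism is precisely the isomorphism $\Gamma \cong \Gamma_{S_\Gamma}^{\textup{row}}$ of Lemma \ref{lem:isomorphismslack}. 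For each $b_j$, take $\lift{b}_j$ to be the restriction of the $j$-th coordinate functional of $\RR^n$ to $\spa(\lift{\Gamma})$: evaluating on $\lift{g}_r$ gives $V_{rj} \in \ZZ_+$, so $\lift{b}_j \in \lift{\Gamma}^*$ and its pullback agrees with $b_j$ on each $a_i$, hence $\iota^*(\lift{b}_j) = b_j$. The reverse containment $\iota^*(\lift{\Gamma}^*) \subseteq \Gamma^*$ is immediate from the fact that any pullback takes nonnegative integer values on the $a_i$, and these generate both $K_\Gamma$ and $\Lambda_\Gamma$.

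The main obstacle is choosing the right construction in the converse direction. The naive attempt, $\lift{\Gamma} = \ZZ_+^k$ with $\iota(a_i) = U_{i,:}^\top$, only yields a well-defined homomorphism when $V$ has full row rank, which can fail whenever $\rankzn(S_\Gamma)$ exceeds the ordinary rank of $S_\Gamma$. Building the lift on the ``column side'' of $S_\Gamma$ inside $\ZZ_+^n$ resolves this because well-definedness is then handed to us for free by Lemma \ref{lem:isomorphismslack}, and the $n$ coordinate functionals of $\RR^n$ furnish canonical preimages $\lift{b}_j$ whose pullbacks recover the dual generators $b_j$. Once that is in place, the argument reduces to bookkeeping.
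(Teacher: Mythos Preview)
Your proposal is correct and follows essentially the same route as the paper. In the first direction the paper inserts the slack matrix of $\lift{\Gamma}$ as a middle factor to get $S_\Gamma = D\,S_{\lift{\Gamma}}\,C$, whereas you skip straight to evaluating the chosen preimages $\lift{b}_j$ on the generators $\lift{a}_r$; in the second direction both you and the paper build $\lift{\Gamma}$ from the rows of the right factor inside $\ZZ_+^n$ and use the coordinate functionals $e_j$ as preimages of the $b_j$, with Lemma~\ref{lem:isomorphismslack} supplying well-definedness of $\iota$.
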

\begin{proof}
Let $\{a_1,...,a_n\}$ and $\{b_1,...,b_m\}$ be the generating sets for $\Gamma$ and $\Gamma^*$, respectively, and let $S_{\Gamma}$ be the $n \times m$ slack matrix of $\Gamma$.

We will start by proving that the rank of the matrix is less or equal than the minimal size of a lift.  Let $\iota : \Gamma \rightarrow \lift{\Gamma}$ be the lift of $\Gamma$ that has the smallest size, say $k$. Let $\{\bar{a}_1,...,\bar{a}_k\}$ and $\{ \bar{b}_1,...,\bar{b}_l\}$ be the generating sets for $\lift{\Gamma}$ and  $\lift{\Gamma}^*$, respectively, and let $S_{\lift{\Gamma}}$ be the $k \times l$ slack matrix of $\lift{\Gamma}$.

By the definition of lift, every $b_j$ is the pullback of some element in $\lift{\Gamma}^*$. This means that there exists nonnegative integers $c^j_p$ such that $b_j = \iota^*(\sum_p c^j_p \bar{b}_p)$. In particular
$$(S_{\Gamma})_{ij}=\langle a_i, b_j\rangle = \sum_{p=1}^l c^j_p \langle \iota(a_i),\bar{b}_p\rangle.$$
On the other hand $\iota(a_i)= \sum_{q} d_q^i \bar{a}_q$ for some nonnegative integers $d_q^i$ so we have
$$(S_{\Gamma})_{ij}= \sum_{p=1}^l  \sum_{q=1}^k c^j_p d_q^i \langle \bar{a}_q, \bar{b}_p \rangle = \sum_{p=1}^l  \sum_{q=1}^k c^j_p d_q^i (S_{\lift{\Gamma}})_{p,q}.$$
Defining $D$ and $C$ to be the $n \times k$ and the $m \times l$ nonnegative integer matrices whose entries are the $c^j_p$ and $d^i_q$ respectively, we have shown $S_{\Gamma} = D S_{\lift{\Gamma}} C$, which implies that the integer nonnegative rank is at most $k$.

We now need to show that if there is a a factorization of size $k$, we can find a lift of size at most $k$. Let $V$ and $W$ be $k\times n$ and $k \times m$ nonnegative integer matrices such that $S_{\Gamma}=V^\top W$. Consider the affine semigroup $\lift{\Gamma}=\Gamma^{\textup{row}}_W$. Note that $\Gamma^{\textup{row}}_{S_{\Gamma}} \subseteq \Gamma^{\textup{row}}_W$ so, if we denote by $S_i$ the $i$-th row of $S_{\Gamma}$ we can define an homomorphism
$\iota: \Gamma \rightarrow \lift{\Gamma}$ by setting $\iota(a_i) = S_i$. Note that we have seen in Lemma \ref{lem:isomorphismslack} that this is in fact an injective homomorphism. Since $\lift{\Gamma}$ is generated by $k$ elements, to conclude the proof we just need to show that it is indeed a lift, that is, $\iota^*(\lift{\Gamma}^*) = \Gamma^*$.

First we will show that $\Gamma^* \subseteq \iota^*(\lift{\Gamma}^*)$. We claim that $\iota^*(e_j)=b_j$. To see this, first note that $e_j\in\lift{\Gamma}^*$ because $\lift{\Gamma} \subseteq \ZZ_+^m$ and the operator $e_j$ simply takes the $j$-th entry of a vector. Moreover, for each generator $a_i$ of $\Gamma$ we have
$$\langle  a_i, \iota^*(e_j)\rangle = \langle  \iota(a_i), e_j\rangle = \langle  S_i,e_j\rangle =S_{ij} = \langle a_i, b_j \rangle.$$
Thus $\iota^*(e_j)=b_j$ for each $j$, so that  all the generators of $\Gamma^*$ are in $\iota^*(\lift{\Gamma}^*)$. 

The other inclusion is an immediate consequence of $\iota(\Gamma) \subseteq \lift{\Gamma}$, since if $x \in \lift{\Gamma}^* $ and $y \in \Gamma$, then $\langle y, \iota^*(x)\rangle = \langle \iota(y), x\rangle \in \ZZ_+$.
\end{proof}

\begin{ex}
In Example \ref{ex:lift} we saw a lift of the normal affine semigroup $\Gamma$ generated by $(1,0)$, $(5,12)$, $(3,7)$, $(1,2)$, and $(1,1)$. We also saw the generators of its dual, $(0,1)$, $(12,-5)$, $(1,0)$, $(3,-1)$, and $(5,-2)$, so combining the information we can see that its slack matrix is
$$S_{\Gamma}=\begin{bmatrix}
0& 12& 1 &3& 5 \\ 12& 0& 5& 3& 1\\ 7& 1& 3& 2& 1\\ 2& 2& 1& 1& 1\\ 1& 7& 1& 2& 3     
\end{bmatrix}.$$
By Theorem \ref{thm:YanSemigroups}, since $\Gamma$ has a lift of size $4$, there must be an nonnegative integer factorization of this matrix with inner dimension $4$, and in fact
$$S_{\Gamma}=\begin{bmatrix}
0 & 2 & 0 & 1\\ 0& 0 & 2 & 1 \\ 1 & 0 & 1 & 1\\ 2 & 0 & 0 & 1 \\ 1 & 1 & 0 & 1
\end{bmatrix}
\begin{bmatrix}
1 & 1 & 0 & 0 & 0 \\ 0 & 6 & 0 & 1 & 2 \\ 6 & 0 & 2 & 1 & 0\\ 0 & 0 & 1 & 1 & 1
\end{bmatrix}.$$
\end{ex}

\subsection{Connection to integer linear programming}

Yannakakis' original theorem for convex polyhedral cones is celebrated for its important applications to complexity questions in linear programming. A natural question to ask is if our affine semigroup version has analogous implications to integer linear programming.

To that end, we consider the integer linear program
\begin{equation}
    \begin{array}{rcl}
    s = & \min_x & c^\top x \\[3pt]
      & \text{s. t. } & A x \geq 0  \\[3pt]
      & &d^\top x = e \\[3pt]
      & & x \in \ZZ^n\\ [3pt]
\end{array} \label{EQ:int_prog}
\end{equation}
for $c \in \ZZ^n, A \in \ZZ^{m \times n}, d \in \ZZ^n$ and $e \in \ZZ$.
Denote the rows of $A$ by $a_i, i=1,\ldots m$. If we let $\Gamma=\Gamma_{A}^{\textup{row}}$ and assume $\Lambda_A^{\textup{row}}=\ZZ^n$, we can write $c$ and $d$ as integer combinations of the rows~$a_i$. Then \eqref{EQ:int_prog} becomes
\begin{equation}
    \begin{array}{rcl}
        s = & \min_x & \sum_{i=1}^m c_i \langle a_i, x\rangle \\[3pt]
      & \text{s. t. } & \sum_{i=1}^m d_i \langle a_i, x\rangle = e \\[3pt]
      & & x \in \Gamma^* \\[3pt]
\end{array} \label{EQ:int_prog_2}
\end{equation}
for some $c_i,d_i\in\ZZ$.
The size of the generating set of $\Gamma$ corresponds to the number of constraints of the original program, while its dimension corresponds to the number of variables. We will show that lifts of $\Gamma$ can be used to rewrite the ILP \eqref{EQ:int_prog_2}.

Given a lift $\iota:\Gamma\to\lift{\Gamma}$ consider the ILP
\begin{equation}
    \begin{array}{rcl}
       s = & \min_x & \sum_{i=1}^m c_i \langle \iota(a_i), y\rangle \\[3pt]
      & \text{s. t. } & \sum_{i=1}^m d_i \langle \iota(a_i), y\rangle = e \\[3pt]
      & & y \in \lift{\Gamma}^* \\[3pt]
\end{array} \label{EQ:int_prog_lift}
\end{equation}
This has the same form as  \eqref{EQ:int_prog_2}, and we can see that it is equivalent as follows.

\begin{prop}
If  $\iota:\Gamma\to\lift{\Gamma}$ is a affine semigroup lift, then 
\eqref{EQ:int_prog_2} and \eqref{EQ:int_prog_lift} have the same minimum value.
\end{prop}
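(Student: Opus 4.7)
The plan is to exploit the two defining features of a lift: the adjoint identity $\langle \iota(a), y\rangle = \langle a, \iota^*(y)\rangle$ for all $a \in \Gamma$ and $y \in \lift{\Gamma}^*$, together with the surjectivity condition $\iota^*(\lift{\Gamma}^*) = \Gamma^*$. These two facts are precisely what is needed to move back and forth between feasible solutions of \eqref{EQ:int_prog_2} and \eqref{EQ:int_prog_lift} while preserving both the linear constraint and the linear objective.

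First I would show that the optimal value of \eqref{EQ:int_prog_lift} is at least $s$. Given any feasible $y \in \lift{\Gamma}^*$ for \eqref{EQ:int_prog_lift}, set $x := \iota^*(y)$. Since $\iota^*$ maps $\lift{\Gamma}^*$ into $\Gamma^*$, we have $x \in \Gamma^*$. The adjoint identity gives $\langle a_i, x\rangle = \langle \iota(a_i), y\rangle$ for every $i$, so the equality constraint transfers directly, showing $x$ is feasible for \eqref{EQ:int_prog_2}, and the two objective values coincide. Hence the minimum of \eqref{EQ:int_prog_lift} is at least $s$.

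Next I would establish the reverse inequality. Here the hypothesis $\iota^*(\lift{\Gamma}^*) = \Gamma^*$ is essential: for any feasible $x \in \Gamma^*$ of \eqref{EQ:int_prog_2}, pick some $y \in \lift{\Gamma}^*$ with $\iota^*(y) = x$. The same adjoint identity then shows $\langle \iota(a_i), y\rangle = \langle a_i, x\rangle$ for each $i$, so $y$ is feasible for \eqref{EQ:int_prog_lift} with objective value equal to the one attained by $x$. Taking a minimizer $x$ of \eqref{EQ:int_prog_2} yields a feasible $y$ with objective value $s$, so the minimum of \eqref{EQ:int_prog_lift} is at most $s$.

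There is no real obstacle beyond unpacking the definitions; the only thing one needs to be careful about is that $\iota^*$ need not be injective, so the correspondence between feasible solutions is in general many-to-one from $\lift{\Gamma}^*$ to $\Gamma^*$. However, this does not affect the equality of optimal values, since each of the two inequalities above uses only one direction of the correspondence. One might also remark that the argument implicitly relies on the finiteness of the minimum (or attainability), but the standard convention that $\min \emptyset = +\infty$ makes the two problems simultaneously infeasible or simultaneously feasible, preserving the equality in the infeasible case as well.
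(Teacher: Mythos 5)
Your proposal is correct and follows essentially the same argument as the paper: both directions pull a feasible point across via $\iota^*$, using surjectivity of $\iota^*$ onto $\Gamma^*$ for one direction and the containment $\iota^*(\lift{\Gamma}^*)\subseteq\Gamma^*$ plus the adjoint identity for the other. The extra remarks on non-injectivity and the infeasible case are fine but not needed.
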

\begin{proof}
Let $\tilde{x}$ be a solution to \eqref{EQ:int_prog_2}. By the definition of lift there exists $\tilde{y} \in \lift{\Gamma}^*$ such that $\iota^*(\tilde{y})=\tilde{x}$. It is clear that $\tilde{y}$ verifies the constraint in \eqref{EQ:int_prog_lift} and has the same objective value as \eqref{EQ:int_prog_2}. On the other hand, if $\hat{y}$ is a solution of \eqref{EQ:int_prog_lift}, then setting $\hat{x}=\iota^*(\hat{y})$, by definition of lift we have that $\hat{x} \in \Gamma^*$. It is again clear that $\hat{x}$ verifies the constraint in \eqref{EQ:int_prog_2} and has the same objective value as \eqref{EQ:int_prog_lift}.
\end{proof}

In this sense, we see that lifts of affine semigroups give rise to lifts of ILPs just as in the LP case. While that is somewhat satisfying, its usefulness for complexity questions is limited. In the LP case the hardness of a problem depends essentially only on the number of constraints, with the number of variables being largely irrelevant. However, in the ILP case the complexity grows as $\log(n)^n$ in the number of variables and is polynomial in the number of constraints and encoding length, so modestly increasing the dimension to reduce the number of constraints is not generally appealing (see \cite{Reis2023TheSF} for a recent breakthrough on the complexity of ILP). While a more compact expression could perhaps be useful in some niche circumstances, there do not seem to exist direct complexity implications from the existence of lifts in this particular encoding. 

\subsection{Connection to toric varieties}

Affine semigroups are intimately connected with toric varieties. Recall that given a finite set $\mathcal{A}=\{a_1,...,a_s\} \subseteq \ZZ^n$ one can define the affine toric variety $Y_{\mathcal{A}}$ as the Zariski closure of the image of the map $\phi_{\mathcal{A}}:(\CC^*)^n \rightarrow (\CC^*)^s$ given by $\phi_{\mathcal{A}}(x)=(x^{a_1},x^{a_2},...,x^{a_s})$.

The dimension of the variety $Y_{\mathcal{A}}$ is the dimension of the span of $\mathcal{A}$, while the embedding dimension is the number $s$ of elements of $\mathcal{A}$. The connection between affine toric varieties and affine semigroups is codified in the following proposition, a proof of which can be found in \cite[Theorem 1.1.17]{CLS11}.

\begin{prop}
If $\mathcal{A}$ is the set of minimal generators of an affine semigroup $\Gamma$, then $Y_{\mathcal{A}} = \textup{Spec}(\CC[\Gamma])$. Moreover, every toric variety can be obtained in this way.
\end{prop}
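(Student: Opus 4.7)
The plan is to identify the coordinate ring of $Y_{\mathcal{A}}$ with the semigroup algebra $\CC[\Gamma]$, and then invoke the intrinsic characterization of toric varieties for the converse.

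For the first claim, I would begin by computing the pullback of the morphism $\phi_{\mathcal{A}}:(\CC^*)^n\to (\CC^*)^s$. This is the $\CC$-algebra homomorphism $\phi_{\mathcal{A}}^*:\CC[y_1^{\pm 1},\ldots,y_s^{\pm 1}]\to \CC[x_1^{\pm 1},\ldots,x_n^{\pm 1}]$ sending $y_i\mapsto x^{a_i}$. A standard fact from algebraic geometry says that the Zariski closure of the image of a morphism between affine varieties is the spectrum of the image of the pullback on coordinate rings, i.e.\ $Y_{\mathcal{A}}=\textup{Spec}(\textup{im}\,\phi_{\mathcal{A}}^*)$ after restricting to the ambient polynomial ring. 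The image is the $\CC$-subalgebra of the Laurent polynomial ring generated by $\{x^{a_1},\ldots,x^{a_s}\}$. Since $\mathcal{A}$ minimally generates $\Gamma$ as a semigroup, every Laurent monomial $x^a$ with $a\in\Gamma$ is a product of the $x^{a_i}$, so this image is precisely $\CC[\Gamma]$. Thus $Y_{\mathcal{A}}=\textup{Spec}(\CC[\Gamma])$.

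For the converse, I would recall that an affine toric variety is, intrinsically, an irreducible affine variety $Y$ containing an algebraic torus $T$ as a dense Zariski-open subset such that the multiplication $T\times T\to T$ extends to an action $T\times Y\to Y$. Given such $Y$, the open inclusion $T\hookrightarrow Y$ dualizes to an injection $\CC[Y]\hookrightarrow \CC[T]\cong \CC[M]$, where $M$ is the character lattice of $T$. Because $T$ is linearly reductive and the action of $T$ on $\CC[Y]$ is algebraic, $\CC[Y]$ decomposes as a direct sum of character spaces $\CC\cdot\chi^a$ for $a$ ranging over some subset $\Gamma\subseteq M$. This $\Gamma$ is closed under addition and contains $0$, hence is an affine semigroup, and since $\CC[Y]$ is a finitely generated $\CC$-algebra, $\Gamma$ is finitely generated. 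Taking $\mathcal{A}$ to be its minimal generating set recovers $Y$ as $Y_{\mathcal{A}}$ by applying the first part.

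The main obstacle is the converse direction: it rests on the non-trivial facts that the coordinate ring of an affine toric variety decomposes as a direct sum of one-dimensional $T$-weight spaces (requiring linear reductivity of $T$) and that the resulting weight set is a finitely generated semigroup (requiring finite generation of $\CC[Y]$ together with the observation that products of weight vectors are weight vectors). The forward direction is essentially a one-line computation once the ``Zariski closure equals Spec of image'' principle is in hand, together with irreducibility of $Y_{\mathcal{A}}$ (automatic as the closure of the image of an irreducible variety).
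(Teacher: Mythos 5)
Your proposal is correct, and it reproduces essentially the standard argument: the paper itself offers no proof but simply cites \cite[Theorem~1.1.17]{CLS11}, whose proof proceeds exactly as you describe (identify $\CC[Y_{\mathcal{A}}]$ with the image of the pullback, i.e.\ the subalgebra generated by the $x^{a_i}$, and for the converse use the weight-space decomposition of $\CC[Y]$ inside $\CC[M]$ together with finite generation). Your remark about taking the closure in the ambient affine space rather than in the torus is the right reading of the definition for the stated equality to hold.
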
 

Since the dual of an affine semigroup $\Gamma$ can be identified with linear maps that take nonnegative integer values on $\Gamma$, it follows that in the setting of toric varieties we can identify elements of $\Gamma^*$ with gradings on the coordinate ring $\CC[\Gamma]$ of the variety $Y_{\mathcal{A}}$.

\begin{lem}
    Let $\Gamma$ be an affine semigroup. The elements of $\Gamma^*$ correspond to $\NN$-gradings of the ring $\CC[\Gamma]$.
    \label{lem:dual_gives_gradings}
\end{lem}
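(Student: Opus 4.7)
The plan is to exhibit an explicit bijection between $\Gamma^*$ and the $\NN$-gradings of $\CC[\Gamma]$, where a grading is understood in the toric-geometric sense of a direct sum decomposition $\CC[\Gamma]=\bigoplus_{n\in\NN} R_n$ with $R_n\cdot R_{n'}\subseteq R_{n+n'}$ in which each monomial $\chi^m$ is homogeneous. Recall that as a $\CC$-vector space $\CC[\Gamma]=\bigoplus_{m\in\Gamma}\CC\cdot\chi^m$, with multiplication $\chi^m\cdot\chi^{m'}=\chi^{m+m'}$, so the semigroup structure of $\Gamma$ is encoded in the algebra.

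For the first direction, given $u\in\Gamma^*$, which we identify with $\textup{Hom}(\Gamma,\ZZ_+)$, I would set $R_n=\spa_\CC\{\chi^m\,:\,m\in\Gamma,\ \langle u,m\rangle=n\}$. Since distinct $\chi^m$ are linearly independent and each lies in exactly one $R_n$, the decomposition $\CC[\Gamma]=\bigoplus_{n\in\NN} R_n$ is direct, and the relation $\langle u,m+m'\rangle=\langle u,m\rangle+\langle u,m'\rangle$ makes it multiplicative. Nonnegativity follows because $u\in K_\Gamma^*$ forces $\langle u,m\rangle\in\ZZ_+$ for every $m\in\Gamma$.

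For the converse, starting from a grading in which each $\chi^m$ is homogeneous, set $u(m):=\deg(\chi^m)\in\NN$. The relation $\chi^m\chi^{m'}=\chi^{m+m'}$ combined with additivity of the degree shows that $u:\Gamma\to\ZZ_+$ is a semigroup homomorphism. To realize $u$ as an element of $\Gamma^*=\Lambda_\Gamma^*\cap K_\Gamma^*$, I would invoke the fact that $\Gamma$ generates $\Lambda_\Gamma$ as a group (from the definition of $\Lambda_\Gamma$); hence $u$ extends uniquely to a $\ZZ$-linear functional on $\Lambda_\Gamma$, placing it in $\Lambda_\Gamma^*$. Its nonnegativity on the generators of $\Gamma$ extends by convexity to nonnegativity on $K_\Gamma$, so the extension lies in $K_\Gamma^*$ as well, giving $u\in\Gamma^*$. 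It is immediate that the two constructions are mutually inverse.

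The main delicate point I anticipate is the implicit restriction to gradings in which every character $\chi^m$ is homogeneous. This is the natural class of gradings on $\CC[\Gamma]$ because they are precisely the gradings induced by one-parameter subgroups of the torus $T=\textup{Hom}(\Lambda_\Gamma,\CC^*)$ acting on $Y_\mathcal{A}=\textup{Spec}(\CC[\Gamma])$; the equivalence of such a $\CC^*$-action with a choice of $u\in\Lambda_\Gamma^*$ is standard, and the $\NN$-valued condition selects exactly the cone $K_\Gamma^*\cap\Lambda_\Gamma^*=\Gamma^*$. One could alternatively phrase the lemma directly in terms of $T$-equivariant $\NN$-gradings (or $\CC^*$-actions commuting with the torus action) to avoid this implicit convention.
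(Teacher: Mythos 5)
Your proof is correct and follows essentially the same route as the paper's: the forward direction is identical, and the converse likewise reads off the degree of each monomial $\chi^m$ and uses additivity of the grading, with your extra step of extending the homomorphism to an element of $\Lambda_\Gamma^*\cap K_\Gamma^*$ simply making explicit what the paper leaves to its earlier identification of $\Gamma^*$ with $\textup{Hom}(\Gamma,\ZZ_+)$. The homogeneity convention you flag is implicitly assumed in the paper's proof as well (it takes each $\chi^{a_i}$ to lie in a single graded piece), so your remark is a fair clarification rather than a divergence.
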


\begin{proof}
   Let $\mathcal{A} = \{a_1,\ldots, a_s\}$ generate $\Gamma$. Then recall that $\CC[\Gamma] = \CC[\chi^{a_1},\ldots, \chi^{a_s}]$. If $y\in\Gamma^*$, then for every $m\in\Gamma$, we have $\langle m,y\rangle\in\ZZ_+$. Thus $y$ gives an $\NN$-grading of $\CC[\Gamma]$ where 
\[
\CC[\Gamma]_n = \left\{\sum_{m\in\Gamma} c_m\chi^m \;|\; c_m\in\CC, \langle m,y\rangle = n \right\}.
\]

Conversely, given an $\NN$-grading of $\CC[\Gamma]$, for each $i=1,
\ldots s$, $\chi^{a_i}$ lives in some graded piece, say of degree $n_i$. Since each $m\in\Gamma$ can be written $m = \lambda_1a_1+\cdots+\lambda_sa_s$ for $\lambda_i\in\ZZ_+$, by the properties of a grading, we have that $\chi^m$ has degree $\sum_i\lambda_in_i\in\ZZ_+$. Thus the grading gives us a homomorphism from $\Gamma$ to $\ZZ_+$, and hence corresponds to an element of $\Gamma^*$. 
\end{proof}

Now a semigroup homomorphism $\psi:\Gamma_1 \rightarrow \Gamma_2$, induces a homomorphism $\overline{\psi}:\CC[\Gamma_1]\to\CC[\Gamma_2]$ which, in turn, gives a morphism $\hat\psi: \textup{Spec}(\CC[\Gamma_2])\to \textup{Spec}(\CC[\Gamma_1])$. A morphism of toric varieties induced in this way is precisely what is known as a \emph{toric morphism}.

This means that a lift $\iota : \Gamma \rightarrow \lift{\Gamma}$ corresponds to a toric morphism $\hat{\iota}:Y_{{\mathcal{B}}}\to Y_{{\mathcal{A}}}$, where $\mathcal{A}$ and ${\mathcal{B}}$ are the generating sets of $\Gamma$ and $\lift{\Gamma}$ respectively. Then since $\iota^*(\lift{\Gamma}^*) = \Gamma^*$, we also get a relation on the gradings of the corresponding coordinate rings.

\begin{defn} 
    Let $V$ be a toric variety. We call a toric variety $W$ a {\em grading lift} of $V$, if there is a toric morphism $\psi:W \to V$ such that for each $\NN$-grading of $\CC[V]$ there exists an $\NN$-grading of $\CC[W]$ such that the induced map $\overline{\psi}$ satisfies $\psi(\CC[V]_n)\subset\CC[W]_n$ for all $n\geq 0$.
\end{defn}

\begin{thm}
    Let $\Gamma$ be an affine semigroup with generating set $\mathcal{A}$, and let $Y_{\mathcal{A}} = \textup{Spec}(\CC[\Gamma])$ be the corresponding toric variety. There exists a grading lift $Y_\mathcal{B}$ of $Y_{\mathcal{A}}$ if and only if there exists a lift $\lift{\Gamma}$, generated by $\mathcal{B}$, of $\Gamma$. 

    The minimal embedding dimension in which such a lift of $Y_\mathcal{A}$ exists is given by the nonnegative integer rank of a slack matrix of $\Gamma$.
\end{thm}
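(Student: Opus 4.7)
The plan is to reduce the statement to the Yannakakis-type theorem (Theorem~\ref{thm:YanSemigroups}) by upgrading the dictionary developed in this subsection into a bijection between semigroup lifts of $\Gamma$ and grading lifts of $Y_\mathcal{A}$. The main ingredients are Lemma~\ref{lem:dual_gives_gradings}, which identifies $\Gamma^*$ with $\NN$-gradings of $\CC[\Gamma]$, and the preceding observation that toric morphisms $Y_\mathcal{B} \to Y_\mathcal{A}$ correspond bijectively to semigroup homomorphisms $\iota : \Gamma \to \lift{\Gamma}$.

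For the forward direction, suppose $\iota : \Gamma \to \lift{\Gamma}$ is a semigroup lift. It induces the toric morphism $\hat\iota : Y_\mathcal{B} \to Y_\mathcal{A}$ via $\overline\iota(\chi^m) = \chi^{\iota(m)}$. Given any $\NN$-grading of $\CC[\Gamma]$ coming from $y \in \Gamma^*$, the defining condition $\iota^*(\lift{\Gamma}^*) = \Gamma^*$ furnishes some $\tilde y \in \lift{\Gamma}^*$ with $\iota^*(\tilde y) = y$. Equipping $\CC[\lift{\Gamma}]$ with the grading associated to $\tilde y$ yields
\[
\deg\bigl(\overline\iota(\chi^m)\bigr) = \langle \iota(m), \tilde y\rangle = \langle m, \iota^*(\tilde y)\rangle = \langle m, y\rangle = \deg(\chi^m),
\]
so $\overline\iota$ is a graded ring map, verifying the grading lift condition.

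For the converse, suppose $Y_\mathcal{B}$ is a grading lift via a toric morphism $\psi$, which by the correspondence arises from some semigroup homomorphism $\iota : \Gamma \to \lift{\Gamma}$. The inclusion $\iota^*(\lift{\Gamma}^*) \subseteq \Gamma^*$ is immediate from $\iota(\Gamma) \subseteq \lift{\Gamma}$. For the reverse, fix $y \in \Gamma^*$; the grading lift property provides $\tilde y \in \lift{\Gamma}^*$ such that $\overline\psi$ is degree-preserving, which unwinds to $\langle \iota(m), \tilde y\rangle = \langle m, y\rangle$ for every $m \in \Gamma$. Since $\Gamma$ generates $\Lambda_\Gamma$ as a group, linearity extends this equality to all of $\Lambda_\Gamma$, forcing $\iota^*(\tilde y) = y$ in $\Lambda_\Gamma^*$, and hence $y \in \iota^*(\lift{\Gamma}^*)$. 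This completes the biconditional.

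For the embedding dimension statement, the embedding dimension of $Y_\mathcal{B}$ is $|\mathcal{B}|$, so minimizing over grading lifts amounts to minimizing $|\mathcal{B}|$ over all $\mathcal{B}$ that generate some semigroup lift of $\Gamma$. By Lemma~\ref{lem:pos_mingenset}, each positive $\lift{\Gamma}$ admits a unique minimal generating set whose cardinality equals the size of the lift, so the infimum is achieved precisely at minimal generating sets of lifts; Theorem~\ref{thm:YanSemigroups} then identifies it with $\rankzn(S_\Gamma)$. The one delicate step is the converse direction: one must carefully unwind the abstract grading-preservation hypothesis on $\overline\psi$ into the concrete condition $\iota^*(\lift{\Gamma}^*) = \Gamma^*$, and this is where the equivalence between gradings and elements of the dual semigroup does the essential work.
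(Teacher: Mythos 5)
Your proposal is correct and follows essentially the same route as the paper's proof: both directions reduce to translating the degree-preservation condition on $\overline{\iota}$ into the identity $\langle \iota(m),\tilde y\rangle = \langle m, y\rangle$ via Lemma~\ref{lem:dual_gives_gradings}, and the embedding-dimension claim is then read off from Theorem~\ref{thm:YanSemigroups}. Your explicit treatment of the easy inclusion $\iota^*(\lift{\Gamma}^*)\subseteq\Gamma^*$ in the converse is a small point the paper leaves implicit, but it is not a different argument.
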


\begin{proof}
First, suppose we have a lift $\iota:\Gamma\to\lift{\Gamma}$. Then every $\NN$-grading of $\CC[\lift{\Gamma}]$ maps to an $\NN$-grading of $\CC[\Gamma]$ via $\iota^*$ and Lemma~\ref{lem:dual_gives_gradings}. Then from the lift property, given a grading $\gamma\in\Gamma^*$ of $\CC[\Gamma]$, we have $\gamma= \iota^*(\delta)$ for some $\delta\in\lift{\Gamma}^*$, and  
\[
\CC[\Gamma]_n = \left\{\sum_{a\in\Gamma} c_a\chi^a \;|\; c_a\in\CC, \langle a,\iota^*(\delta)\rangle = n \right\}.
\]
Denote by $\overline{\iota}$ the map from $\CC[\Gamma]$ to $\CC[\lift{\Gamma}]$ induced by $\iota$. 
Since $\langle a,\iota^*(\delta)\rangle = \langle \iota(a),\delta\rangle$, we get that $\delta$ defines an $\NN$-grading of $\CC[\lift{\Gamma}]$ such that degree is preserved by $\overline{\iota}$
\[
\overline{\iota}\big(\CC[\Gamma]_n\big) = \left\{\sum_{a\in\Gamma} c_a\chi^{\iota(a)} \;|\; c_a\in\CC, \langle \iota(a),\delta\rangle = n \right\} \subseteq \CC[\lift{\Gamma}]_n.
\]

Conversely, suppose we have a grading lift $Y_\mathcal{B}$ of $Y_\mathcal{A}$ with toric morphism $\hat\psi: Y_{\mathcal{B}} \to Y_\mathcal{A}$, where $\mathcal{A,B}$ generate affine semigroups $\Gamma,\lift{\Gamma}$, respectively. That is, for each $\NN$-grading of $\CC[\Gamma]$, there is an $\NN$-grading of $\CC[\lift{\Gamma}]$ such that the corresponding map of coordinate rings $\overline{\psi}$ satisfies 
\begin{equation}
 \overline{\psi}\left( \CC[\Gamma]_n \right) \subset \CC[\lift{\Gamma}]_n \text{ for all } n\geq 0.
 \label{eq:deg_preserving_morphism}
\end{equation}

Since $\hat{\psi}$ is a toric morphism, we know that $\overline{\psi}$ is induced by a semigroup homomorphism $\psi:\Gamma\to\lift{\Gamma}$.

Then given $\gamma\in\Gamma^*$, by \eqref{eq:deg_preserving_morphism}, there exists $\delta\in\lift{\Gamma}^*$ such that 
\begin{align*}
\overline{\psi}\big(\CC[\Gamma]_n\big) 
    & = \left\{\sum_{a\in{\Gamma}} c_a\chi^{\psi(a)} \;|\; c_a\in\CC, \langle a,\gamma\rangle = n \right\} \\
    & \subseteq \left\{\sum_{b\in{\lift{\Gamma}}} c_b\chi^{b} \;|\; c_b\in\CC, \langle b,\delta\rangle = n \right\}.
\end{align*}
That is, for each $n\geq 0$, $\langle a, \gamma\rangle = n$ implies $\langle \psi(a), \delta\rangle = n$. But now 
\[
\langle a, \gamma\rangle = \langle\psi(a),\delta\rangle = 
\langle a,\psi^*(\delta)\rangle
\]
for all $a\in\Gamma$, so we must have $\gamma = \psi^*(\delta)$, so that $\psi:\Gamma\to \lift{\Gamma}$ is a lift.

The final statement follows from Theorem~\ref{thm:YanSemigroups} and the fact that embedding dimension of a toric variety $Y_\mathcal{B}$ is given by the number of elements in $\mathcal{B}$.

\end{proof}

\section{Nonnegative integer rank} \label{sec:nn_int_rank}

In Definition   \ref{def:nnintrank} we introduced the concept of nonnegative integer rank of a nonnegative integer matrix. In this section, we survey some of the known properties for this notion of rank, as well as establish some new results.

The nonnegative rank is a special case of what is known as a semiring or a factorization rank (see \cite{gregory1983semiring}). Given $A \in \ZZ_+^{n \times m}$, for any semiring $S$ that contains $\ZZ_+$ one can define a $\rank_S(A)$ as the smallest $k$ for  which there exist $B \in S^{n \times k}$ and $C \in S^{k \times m}$ such that $A=BC$. 

For $S=\ZZ_+$ we recover $\rankzn(A)$, while for $S=\RR_+$ we get the usual nonnegative rank $\rankp(A)$, and for $S=\RR$ we simply obtain the rank. From this simple observation it is clear that we get the following result, which can be seen in a much more general form in \cite{beasley1995rank}.

\begin{lem}
For any matrix $A \in \ZZ_+^{n \times m}$, we have
$$\rank(A) \leq \rankp(A) \leq \rankzn(A).$$
\end{lem}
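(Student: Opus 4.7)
The plan is to exploit the chain of set inclusions $\ZZ_+ \subseteq \RR_+ \subseteq \RR$ among the semirings in question. Each of the three ranks is defined as the minimum inner dimension among factorizations with entries in the respective semiring, so enlarging the set of allowed entries can only enlarge the family of admissible factorizations, and hence can only decrease the minimum.

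Concretely, I would argue each inequality directly. For $\rankp(A) \leq \rankzn(A)$, let $k = \rankzn(A)$ and pick nonnegative integer matrices $B \in \ZZ_+^{n \times k}$, $C \in \ZZ_+^{k \times m}$ with $A = BC$. Since $\ZZ_+ \subseteq \RR_+$, the very same matrices witness a nonnegative real factorization of $A$ of size $k$, so $\rankp(A) \leq k$. For $\rank(A) \leq \rankp(A)$, repeat the argument with $\RR_+ \subseteq \RR$: any nonnegative real factorization of size $\rankp(A)$ is in particular a real factorization of that size, and the usual rank is the minimum such size.

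The argument requires no calculation and encounters no genuine obstacle; it is a direct consequence of the fact that the feasible set of factorizations is monotone in the semiring, together with the observation that all three notions are defined as the minimum inner dimension. In fact, as noted in the statement, this is a special case of the general monotonicity of $\rank_S$ in $S$ proved in \cite{beasley1995rank}, and one could simply cite that reference; I would still include the short self-contained argument above for the reader's convenience.
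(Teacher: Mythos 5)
Your proposal is correct and matches the paper's reasoning exactly: the paper also derives the chain of inequalities from the semiring containments $\ZZ_+ \subseteq \RR_+ \subseteq \RR$ and the monotonicity of the minimum factorization size, citing the same general result. Nothing is missing.
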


Besides this obvious bound, there is a wealth of literature in lower bounding the nonnegative rank (see for instance \cite{fiorini2013combinatorial,fawzi2015lower}) that trivially give lower bounds on $\rankzn(A)$ as well. But it seems that there are no general lower bound inequalities that apply specifically to $\rankzn(A)$.

It is well known that for $\rank(A)\leq 2$ we must have $\rankp(A)=\rank(A)$, while for any other value of $\rank(A)$ we can have a strict inequality (see \cite[Corollary 4.5.1]{beasley1995rank} for a refined version of this result). However, even for rank $2$ we can find a gap for the nonnegative integer rank.

\begin{ex}
Consider following rank $2$ nonnegative integer matrix
$$A=\begin{pmatrix}
2 & 1 & 0 \\
1 & 1 & 1 \\
0 & 1 & 2
\end{pmatrix} =
\begin{pmatrix}
1 & 0 \\
\frac{1}{2} & \frac{1}{2}   \\
0 & 1
\end{pmatrix}
\begin{pmatrix}
2 & 1 & 0 \\
0 & 1 & 2 
\end{pmatrix}.
$$
The given factorization shows that $\rankp(A)=\rank(A)=2$, but we claim that $\rankzn(A)=3$. To see this, note that we can think of $\rankzn(A)$ as the smallest possible number of matrices of the form $v_i w_i^\top$, where $w_i,v_i \in \ZZ_+^3$, that sum up to $A$. Each individual matrix of this form  necessarily has rectangular support, and the only way of covering the support of $A$ with two rectangles would require two rectangles overlapping at the central entry. Since each $v_i w_i^\top$ has nonnegative integer entries, that would imply the central entry must be at least $2$. 
\end{ex}

One interesting question is how large can the gap between $\rankp(A)$ and $\rankzn(A)$ be. For the $\{0,1\}$-case the paper \cite{MR1160062} establishes that the ratio ${\rank(A)}/{\rankzn(A)}$ can be arbitrarily close to zero, but there seem to be no implications in terms of the ratio ${\rankp(A)}/{\rankzn(A)}$. In what follows we establish that this ratio can also be arbitrarily close to zero and, in fact, this is true even when restricting to $\rankp(A)=2$. 

To find an example of such behaviour, we use the theory we have developed regarding slack matrices of affine semigroups. The intuition for searching for such examples among slack matrices is that being a slack matrix imposes significant constraints on nonnegative factorizations, so it should be easier to find examples with dramatic gaps between nonnegative rank and integer nonnegative rank. It turns out that this does allow us to find examples with the desired gap. 

\subsection{Fibonacci semigroups and nonnegative integer rank}

For every positive integer $n$ consider the normal affine semigroup
$$\Gamma^{\textup{Fib}}_n= \ZZ^2 \cap \{(x,y) \in \RR^2 \, : \,  0 \leq F_{2n} y \leq F_{2n+2} x\}$$
where $F_n$ is the $n$-th element of the  Fibonacci sequence $(1,1,2, \ldots )$. In other words, $\Gamma^{\textup{Fib}}_n$  is the set of integer points in a cone in the positive orthant bounded by the $x$-axis and the line with slope $F_{2n+2}/F_{2n}$. We will see some interesting properties of this family of affine semigroups, starting with a few simple properties that are implied by basic properties of the Fibonacci sequence.

\begin{lem} \label{lem:fibsemigroupprop}
Let $\left\{\Gamma^{\textup{Fib}}_n\right\}_{n\geq 1}$ be the sequence of semigroups defined above. Then
\begin{enumerate}
\item We have $\Gamma^{\textup{Fib}}_k \supset \Gamma^{\textup{Fib}}_{k+1}$ for all $k$;
\item $\bigcap_{n =1}^{\infty} \Gamma^{\textup{Fib}}_n = \ZZ^2 \cap \{(x,y) \in \RR^2 \, : \, 0 \leq  y \leq \phi^2 x\}$
with $\phi$ the golden ratio;
\item The dual of $\Gamma^{\textup{Fib}}_n$ is $(\Gamma^{\textup{Fib}}_n)^* = \ZZ^2 \cap \{(x,y) \in \RR^2 \, : \,  0 \leq x, F_{2n+2} y \geq - F_{2n} x\}$.
\end{enumerate}
\end{lem}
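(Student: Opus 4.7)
My plan is to deduce all three statements from one Fibonacci identity together with the classical limit $F_{m+1}/F_m \to \phi$. Specifically, I would invoke Catalan's identity $F_n^2 - F_{n-r}F_{n+r} = (-1)^{n-r} F_r^2$, which with $r=2$ and $n=2k+2$ specializes to $F_{2k+2}^2 - F_{2k} F_{2k+4} = 1 > 0$. Rewriting this as an inequality between ratios shows that the sequence of slopes $s_k := F_{2k+2}/F_{2k}$ is strictly decreasing in $k$, and since $F_{2k+2}/F_{2k} = (F_{2k+2}/F_{2k+1})(F_{2k+1}/F_{2k})$ tends to $\phi^2$, the $s_k$ decrease strictly to $\phi^2$ from above.

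Given this, part (1) will be immediate: since $\Gamma^{\textup{Fib}}_{k+1}$ is cut out of $\ZZ^2$ by $0 \le y \le s_{k+1} x$, any element of it also satisfies $y \le s_k x$ and hence lies in $\Gamma^{\textup{Fib}}_k$. For part (2), one inclusion comes from passing to the limit in $y \le s_n x$ over all $n$. For the other, I would use the irrationality of $\phi^2$: for any nonzero integer point $(x,y)$ with $0 \le y \le \phi^2 x$, the ratio $y/x$ is rational and therefore strictly less than $\phi^2$, hence strictly less than every $s_n$, so the point belongs to every $\Gamma^{\textup{Fib}}_n$.

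Finally, for part (3) I will compute the cone dual directly. The cone $K_{\Gamma^{\textup{Fib}}_n}$ is generated by the extreme rays $(1,0)$ and $(F_{2n},F_{2n+2})$, so a routine dualization gives $K^*_{\Gamma^{\textup{Fib}}_n} = \{(x,y) : x \ge 0,\ F_{2n+2}y + F_{2n}x \ge 0\}$, matching the stated description. To pass from the cone dual to the affine semigroup dual $(\Gamma^{\textup{Fib}}_n)^* = \Lambda^*_{\Gamma^{\textup{Fib}}_n}\cap K^*_{\Gamma^{\textup{Fib}}_n}$ I also need $\Lambda_{\Gamma^{\textup{Fib}}_n} = \ZZ^2$; this follows because $(1,0)$ and $(1,1)$ both lie in $\Gamma^{\textup{Fib}}_n$ (using $s_n \ge 1$) and these two vectors generate $\ZZ^2$ as a lattice, so $\Lambda^*_{\Gamma^{\textup{Fib}}_n} = \ZZ^2$ and intersecting with $K^*_{\Gamma^{\textup{Fib}}_n}$ gives the claim. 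The only genuinely delicate step is verifying the strict decreasing convergence of the $s_n$ to $\phi^2$; once that is in hand, the rest of the argument is routine.
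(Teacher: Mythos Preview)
Your proposal is correct and follows essentially the same route as the paper: both arguments use Catalan's identity to show the slopes $F_{2k+2}/F_{2k}$ are strictly decreasing, the factorization $(F_{2n+2}/F_{2n+1})(F_{2n+1}/F_{2n})\to\phi^2$ for the limit, and a direct computation of the dual cone together with $\Lambda_{\Gamma^{\textup{Fib}}_n}=\ZZ^2$ for part (3). If anything, you are slightly more explicit than the paper in justifying the reverse inclusion in (2) via the irrationality of $\phi^2$ and in exhibiting $(1,0),(1,1)\in\Gamma^{\textup{Fib}}_n$ to pin down the lattice.
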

\begin{proof}
The first fact is basically equivalent to say that the sequence of slopes $a_n=F_{2n+2}/F_{2n}$ is decreasing. But $$ \frac{a_{n}}{a_{n-1}}=\frac{F_{2n+2}F_{2n-2}}{F_{2n}^2}$$ and by Catalan's identity, $F_{2n+2}F_{2n-2}=F_{2n}^2-1$, so $\frac{a_{n}}{a_{n-1}} < 1$ and we have the desired monotonicity.

For the second fact, it is enough to note that $\lim_n a_n = \phi^2$, since 
$$\lim_n a_n = \left(\lim_n \frac{F_{2n+2}}{F_{2n+1}}\right)\left(\lim_n \frac{F_{2n+1}}{F_{2n}}\right) $$
and the limit of the quotient of consecutive Fibonacci numbers is the golden ratio.

Finally, the last fact is simply obtained applying the definition of the dual. The underlying lattice is $\ZZ^2$, which is self dual, and the dual of the cone can be easily seen to be the cone in the statement.
\end{proof}

We are interested in the slack matrix of this semigroup. We can also compute the general form of these matrices using properties of the Fibonacci numbers.

\begin{lem}
The minimal generating sets of $\Gamma^{\textup{Fib}}_n$ and  $(\Gamma^{\textup{Fib}}_n)^*$ are, respectively,
$$\mathcal{A}=\{(F_{2n},F_{2n+2}),(1,0),(1,1)\} \cup \{(F_{2k-1},F_{2k+1}), k=1,...,n\}$$
and
$$\mathcal{B}=\{(0,1)\} \cup \{(F_{2k+2},-F_{2k}), k=0,...,n\}.$$
With this ordering the slack matrix is the $(n+3) \times (n+2)$ matrix
$$S_{n}^\ZZ=\begin{bmatrix}
F_{2n+2}         & F_{2n}     & F_{2n-2}   & F_{2n-4} & \cdots    & F_{2}    & 0       \\
0                & F_{2}      & F_{4}      & F_6      & \cdots    & F_{2n}   & F_{2n+2} \\
F_{1}            & F_{1}      & F_{3}      & F_5      & \cdots    & F_{2n-1} & F_{2n+1} \\
F_{3}            & F_{1}      & F_{1}      & F_3      & \cdots    & F_{2n-3} & F_{2n-1}  \\
F_{5}            & F_{3}      & F_{1}      & F_1      & \cdots    & F_{2n-5} & F_{2n-3}  \\
\vdots           & \vdots     & \ddots     & \ddots   &           & \ddots   & \vdots \\
F_{2n+1}         & F_{2n-1}   & F_{2n-3}   & F_{2n-5} & \cdots    & F_1       & F_1

 \end{bmatrix}$$
\end{lem}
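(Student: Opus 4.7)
The plan is to observe that both $\Gamma^{\textup{Fib}}_n$ and $(\Gamma^{\textup{Fib}}_n)^*$ are positive normal $2$-dimensional affine semigroups (each is $\ZZ^2$ intersected with a pointed rational cone; the lattices are all of $\ZZ^2$ because $(1,0)$ and $(1,1)$, resp.\ $(1,0)$ and $(0,1)$, belong to the respective semigroups), so their unique minimal generating sets are their Hilbert bases. I would then use the standard fact that for a $2$-dimensional rational cone in $\ZZ^2$, any chain $u_0,\dots,u_s$ of lattice points in angular order, starting and ending at the primitive ray generators, is the (unique minimal) Hilbert basis provided every consecutive pair forms a $\ZZ^2$-basis ($\det=\pm 1$) and every interior $u_i$ satisfies $u_{i-1}+u_{i+1}=c_i u_i$ with $c_i\geq 2$.

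With this reduction in place, the verification becomes Fibonacci arithmetic. Primitivity of the ray generators $(F_{2n},F_{2n+2})$ and $(F_{2n+2},-F_{2n})$ follows from $\gcd(F_m,F_n)=F_{\gcd(m,n)}$, which gives $\gcd(F_{2n},F_{2n+2})=F_2=1$. The consecutive-pair determinants reduce to Catalan's identity
\[
F_m^2-F_{m-r}F_{m+r}=(-1)^{m-r}F_r^2,
\]
used with $r=2$ for the dual (giving $\det((F_{2k},-F_{2k-2}),(F_{2k+2},-F_{2k}))=-1$) and with $r=1,2$ for the primal (yielding, among others, $\det((F_{2k-1},F_{2k+1}),(F_{2k+1},F_{2k+3}))=1$ and $\det((F_{2n-1},F_{2n+1}),(F_{2n},F_{2n+2}))=1$). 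The multiplicity condition $c_i\geq 2$ is verified via the additive identity $F_{m-2}+F_{m+2}=3F_m$, which gives $c_i=3$ at each generic interior generator; the boundary transitions near $(1,0),(1,1)$ and $(F_{2n-1},F_{2n+1}),(F_{2n},F_{2n+2})$ give $c_i=2$.

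Once the generators are pinned down, the slack matrix entries are direct inner products, each reducing to a Fibonacci identity. The generic entry
\[
\langle (F_{2k-1},F_{2k+1}),(F_{2k'+2},-F_{2k'})\rangle = F_{2k-1}F_{2k'+1}-F_{2k}F_{2k'}=F_{2(k-k')-1}
\]
follows by expanding $F_{2k'+2}$ and $F_{2k+1}$ via the Fibonacci recurrence and then applying d'Ocagne's identity $F_m F_{n+1}-F_{m+1}F_n=(-1)^n F_{m-n}$; taking absolute values using $F_{-(2m+1)}=F_{2m+1}$ recovers the symmetric pattern displayed in $S_n^\ZZ$. The remaining rows, corresponding to $(F_{2n},F_{2n+2})$, $(1,0)$, and $(1,1)$, are handled by the same manipulation, reducing for instance to $F_{2n}F_{2k+2}-F_{2n+2}F_{2k}=F_{2n-2k}$ and $\langle(1,1),(F_{2k+2},-F_{2k})\rangle=F_{2k+1}$.

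The main obstacle is really just the bookkeeping: organizing the several Fibonacci identities in play and carefully tracking the signs coming from negative indices ($F_{-m}=(-1)^{m+1}F_m$). No individual computation is deep, but aligning the indexing precisely with the stated matrix pattern takes care.
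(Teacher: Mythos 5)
Your proposal is correct, and its second half --- the computation of the slack matrix entries --- is essentially the same as the paper's: both expand one index via the Fibonacci recurrence and then apply d'Ocagne's identity, with the convention $F_{-m}=(-1)^{m+1}F_m$ absorbing the sign for the subdiagonal entries. Where you genuinely diverge is in certifying that $\mathcal{A}$ and $\mathcal{B}$ are the Hilbert bases. The paper does this by citing a continued-fraction theorem (the semigroup $\{(x,y)\in\ZZ_+^2 : x\ge\alpha y\}$ is generated by the upper convergents of $\alpha$) together with the observation that the convergents of $F_{2n+2}/F_{2n}$ are precisely the ratios $F_{k+2}/F_k$, since this fraction is a truncation of the continued fraction of $\phi^2$. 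You instead invoke the unimodular-chain characterization of two-dimensional Hilbert bases (consecutive determinants $\pm1$ with consistent sign, interior relations $u_{i-1}+u_{i+1}=c_iu_i$ with $c_i\ge 2$) and verify the hypotheses by hand using $\gcd(F_m,F_n)=F_{\gcd(m,n)}$, Catalan's identity, and $F_{m-2}+F_{m+2}=3F_m$; I checked your determinant and multiplicity claims, including the $c_i=2$ transitions at the two ends of each chain and the $c_i=3$ generic case, and they are all correct. The two routes rest on the same underlying convexity phenomenon (continued-fraction convergents are exactly the vertices of the relevant lattice hulls), but yours is self-contained modulo a textbook fact about 2D cones, while the paper's is shorter at the price of an external citation. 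Either would be acceptable; if you write yours up, you should still give a reference or a short proof for the chain criterion itself, since that is the one nontrivial ingredient you take for granted.
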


\begin{proof}

That $\mathcal{A}$ and  $\mathcal{B}$ are Hilbert basis, can be seen from two facts.

The first is that the convergents of the continued fraction expansion of $\frac{F_{2n+2}}{F_{2n}}$ are the quotients $F_{k+2}/F_k$ for $k \leq {2n+2}$  as it is simply a truncation of the continued fraction of $\phi^2$, which has these as convergents. 

The second is that the set $\{(x,y) \in \ZZ^2_+ \, : \, x \geq \alpha y\}$ is generated by the vectors $(p,q)$ where $p/q$ is an upper convergent to $\alpha$. In this form, this result can be found for instance in \cite[Theorem 1]{DRESS1982292}.

By setting $\alpha=\phi^{-2}$, and noting that inverting the number only adds a zero to the list of convergents and inverts the others we get the result for $\mathcal{A}$. A simple transformation of the second set by changing the sign of the second coordinate allow us to get the result for~$\mathcal{B}$. 

Now that we have the generators we simply need to calculate the slack matrix. The simplified form presented here comes essentially from d'Ocagne's identity: $F_mF_{n+1}-F_{m+1}F_n=(-1)^nF_{m-n}$. In fact by expanding $F_{2k+1}$ and $F_{2l+2}$ with the Fibonacci recursion formula we get
$$(F_{2k-1},F_{2k+1})^\top (F_{2l+2},-F_{2l}) =  F_{2k-1}F_{2l+1}-F_{2k}F_{2l} = F_{2(k-l)-1}.$$
Here we follow the usual convention $F_{-m}=(-1)^{m+1}F_m$. All the other entries either follow the same idea or are trivial.
\end{proof}

Note that we have $\rank(S_{n}^\ZZ)=2$ which means that we must also have  $\rankp(S_{n}^\ZZ)=2$ for every $n$. A brute force computational search shows that for $n =1,...,4$ we have $\rankzn(S_{n}^\ZZ)=n+2$, the maximum possible.
However, even in these small examples certifying the computed integer nonnegative rank in a human-readable form is hard; each case seems to require different and sometimes long ad hoc arguments.

Although the question of certifying the exact rank of these matrices remains open, we are at least able to show that the rank grows to infinity as $n$ grows. To do this we will focus on the submatrix of $S_n^{\ZZ}$ obtained by dropping the first column and the first two rows. This is a $(n+1) \times (n+1)$ matrix, that we will denote by $M_n$. It is a Toeplitz matrix, with $(i,j)$-entry  equal to $F_{2(i-j)+1}$. So we have
\begin{equation}
    M_n=\begin{bmatrix}
F_{1}         & F_{3}      & F_{5}      & \cdots      & \cdots  & F_{2n+1}       \\
F_{1}         & F_{1}      & F_{3}      & \ddots      &         & \vdots \\
F_{3}         & F_{1}      & \ddots     &  \ddots     &  \ddots &  \vdots  \\
\vdots        &\ddots      & \ddots     & \ddots      &   F_{3} & F_{5}  \\
\vdots        &            & \ddots     & F_{1}        & F_{1}  & F_{3}  \\
F_{2n-1}      & \cdots     & \cdots     & F_{3}         & F_{1}       & F_1
 \end{bmatrix}.\label{eq:fibmatrix}
\end{equation}
This is a submatrix of the rank 2 integer Toeplitz matrix with entries $F_{i-j}$, which has been studied for instance in \cite{dixon2014application}. Here we prove a simple fact about the growth of its nonnegative integer rank using its recursive structure. To do that we will use the following lemma that gives sufficient conditions to guarantee that a matrix rank is bigger than that of certain submatrices.

\begin{lem} \label{lem:normbound}
Suppose $A_1 \in \ZZ_+^{n_1 \times m_1}$ and $A_2 \in \ZZ_+^{n_2 \times m_2}$ have the same nonnegative integer rank $r$. Let $A \in \ZZ_+^{(n_1+n_2) \times (m_1 + m_2)}$ have the form
$$A=\begin{bmatrix} A_1 & \star \\ \star & A_2 \end{bmatrix}.$$
If $(\|A_1\|_1+2)(\|A_2\|_1+2) \leq \|A\|_1+4$, where $\| \cdot \|_1$ represents the sum of the entries of the matrix, then $\rankzn(A) \geq r+1$. 
\end{lem}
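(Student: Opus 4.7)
My plan is to proceed by contrapositive: assume $\rankzn(A)\leq r$ and show this forces $(\|A_1\|_1+2)(\|A_2\|_1+2) > \|A\|_1+4$. A nonnegative integer factorization of size $r$ can be written $A=\sum_{i=1}^{r} v_i w_i^\top$ with $v_i\in\ZZ_+^{n_1+n_2}$, $w_i\in\ZZ_+^{m_1+m_2}$. Decomposing each vector according to the block structure of $A$ as $v_i^\top = (u_i^\top, (u'_i)^\top)$ with $u_i\in\ZZ_+^{n_1}$, $u'_i\in\ZZ_+^{n_2}$, and analogously $w_i^\top = (x_i^\top, (x'_i)^\top)$, we obtain factorizations $A_1=\sum_i u_i x_i^\top$ and $A_2=\sum_i u'_i (x'_i)^\top$ of size at most $r$. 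The first key observation is that since $\rankzn(A_1)=\rankzn(A_2)=r$, none of the rank-one summands in either factorization can vanish (a zero summand would produce a factorization of smaller size). Consequently each of the integer quantities $a_i := \|u_i\|_1$, $b_i := \|x_i\|_1$, $c_i := \|u'_i\|_1$, $d_i := \|x'_i\|_1$ is at least $1$.

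Next I would do bookkeeping on $\ell_1$-norms. Using $\|v_i w_i^\top\|_1 = (a_i+c_i)(b_i+d_i)$, together with $P := \|A_1\|_1 = \sum_i a_i b_i$ and $Q := \|A_2\|_1 = \sum_i c_i d_i$, a direct expansion shows
\[
(\|A_1\|_1+2)(\|A_2\|_1+2) - (\|A\|_1+4) \;=\; PQ + \sum_i (a_i-c_i)(b_i-d_i).
\]
The problem thus reduces to showing that the right-hand side is strictly positive. The first term is nonnegative, but the mixed sum can a priori be negative, so the real task is to produce a good lower bound for it.

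This is where the main, if still elementary, obstacle lies. The key inequality is $|a-c|\leq ac-1$ whenever $a,c\geq 1$ are integers, which is immediate in the case $a\leq c$ since $c-a\leq ac-1$ is equivalent to $a(c+1)\geq c+1$. Applying it in both factors and using $a_ic_i+b_id_i\geq 2$ gives
\[
(a_i-c_i)(b_i-d_i) \;\geq\; -(a_ic_i-1)(b_id_i-1) \;\geq\; 1 - a_ib_ic_id_i.
\]
Summing this bound over $i$ and combining with the trivial estimate $PQ = \sum_{i,j} a_i b_i c_j d_j \geq \sum_i a_ib_ic_id_i$, the quartic terms cancel and one is left with at least $r$ copies of $1$. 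Hence the right-hand side is $\geq r\geq 1>0$, yielding the desired contradiction and proving the lemma.
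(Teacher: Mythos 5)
Your proof is correct, and while the opening moves coincide with the paper's (assume a size-$r$ factorization, split each rank-one term according to the block structure, observe that minimality of $r$ for $A_1$ and $A_2$ forces all four block norms $a_i,b_i,c_i,d_i$ to be at least $1$, and record the three $\ell_1$-identities), the technical core is genuinely different. The paper treats $\|A\|_1=\sum_i(a_i+c_i)(b_i+d_i)$ as the objective of an integer maximization problem subject to $\sum_i a_ib_i=\|A_1\|_1$ and $\sum_i c_id_i=\|A_2\|_1$; it normalizes via the substitution $\hat\alpha_i=\alpha_i\gamma_i$, $\hat\gamma_i=1$, reduces to maximizing an inner product of vectors with prescribed $\ell_1$-norms and entries at least $2$, and finishes with Cauchy--Schwarz plus an extremal $\ell_2$-norm computation. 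You instead prove the exact identity $(\|A_1\|_1+2)(\|A_2\|_1+2)-(\|A\|_1+4)=PQ+\sum_i(a_i-c_i)(b_i-d_i)$ (which I have checked) and lower-bound the right-hand side termwise using $|a-c|\le ac-1$ for positive integers together with $PQ\ge\sum_i a_ib_ic_id_i$; the quartic terms cancel and you are left with $\ge r$. Your route is more elementary --- no optimization, no Cauchy--Schwarz, no extremal-vector analysis --- and it yields the quantitatively sharper conclusion that the deficit $(\|A_1\|_1+2)(\|A_2\|_1+2)-\|A\|_1-4$ is at least $r$ under any size-$r$ factorization, so the hypothesis of the lemma could in principle be slightly weakened. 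The paper's approach, by contrast, makes explicit the extremal configurations (all but one entry equal to $2$), which is the structural information behind the bound. One cosmetic remark: your contrapositive framing silently covers the case $\rankzn(A)<r$, but as you note that case is vacuous anyway, since a smaller factorization of $A$ would restrict to one of $A_1$.
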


\begin{proof}
We know $\rankzn(A) \geq r$, so suppose for a  contradiction it is $r$.
Then there are vectors $a_i \in \ZZ_+^{n_1}$, $b_i \in \ZZ_+^{n_2}$, $c_i \in \ZZ_+^{m_1}$ and $d_i \in \ZZ_+^{m_2}$ for $i=1,...,r$ such that 
$$A=\sum_{i=1}^r \begin{pmatrix}
    a_i \\ b_i
\end{pmatrix}(c_i^\top,d_i^\top).$$ this implies $\sum_{i=1}^r a_ic_i^\top = A_1$ and $\sum_{i=1}^r b_i d_i^\top = A_2$, and since $r$ is the nonnegative integer rank of $A_1$ and $A_2$, none of these summands can be zero.

In particular, we have 
\begin{equation}\label{eq:sumoftermsA}
\sum_{i=1}^r \|a_i\|_1\|c_i\|_1= \|A_1\|_1, \textrm{ and } \sum_{i=1}^r \|b_i\|_1\|d_i\|_1 =  \|A_2\|_1 
\end{equation}
and none of the norms can be zero. Similarly, from the factorization of $A$ we get
\begin{equation}\label{eq:sumoftermsB}
 \sum_{i=1}^r (\|a_i\|_1+\|b_i\|_1)(\|c_i\|_1+\|d_i\|_1) =  \|A\|_1.
\end{equation}
We will compute the maximum value the sum in \eqref{eq:sumoftermsB} can have, given \eqref{eq:sumoftermsA} as constrains. To simplify notation, we will replace the norms by integer unknowns that are greater or equal to $1$ and we get the optimization problem
\begin{equation} \label{eq:maxtermsA}
\textup{max}_{\ZZ_{++}} \sum_{i=1}^r  ({\alpha_i}+{\beta_i})({\gamma}_i+{\delta}_i) \textup{ s.t. }  \sum_{i=1}^r \alpha_i \gamma_i = \|A_1\|_1   \textrm{ and } \sum_{i=1}^r \beta_i \delta_i = \|A_2\|_1.
\end{equation}
Now we note  that given any feasible solution $(\alpha,\beta,\gamma,\delta)$ of \eqref{eq:maxtermsA},  making the substitutions $\hat{\alpha}_i=\alpha_i \gamma_i$, $\hat{\delta}_i=\beta_i \delta_i$, $\hat{\gamma}_i=\hat{\beta}_i=1$, keeps it feasible, as we preserved the products $\hat{\alpha_i}\hat{\gamma}_i$ and $\hat{\beta_i}\hat{\delta}_i$.
Moreover, one can see that
$$(\hat{\alpha}_i+\hat{\beta}_i)(\hat{\gamma}_i+\hat{\delta}_i)=(\alpha_i \gamma_i +1)(1+\beta_i \delta_i) \geq ({\alpha_i}+{\beta_i})({\gamma}_i+{\delta}_i)$$
by expanding the products and noting that the sum of two positive integers is always at most their product plus one. So the maximum value for  \eqref{eq:maxtermsA} is attained when all $\beta_i$ and $\gamma_i$ are $1$. By setting $v_i=\hat{\alpha}_i+1$ and $w_i =\hat{\delta}_i+1$, 
we transform \eqref{eq:maxtermsA} into the equivalent problem
\begin{equation} \label{eq:maxtermsB}
\textup{max}_{\ZZ_{++}} v^\top w \textup{ s.t. }  \|v\|_1=\|A_1\|_1+r,   \|w\|_1=\|A_2\|_1+r \textup{ and } v_i \geq 2, w_i \geq 2, i=1,...,r. 
\end{equation}
Maximizing the inner product is always less or equal than maximizing the product of the $2$-norms. Moreover, the maximum $2$-norm of a vector with given $1$-norm and entries all greater or equal to $2$ can be found explicitly, as it is achieved when all entries except possibly one are equal to $2$. From this, we see that the maximum is upper bounded by
$$\sqrt{(4(r-1)+(\|A_1\|_1-r+2)^2)(4(r-1)+(\|A_2\|_1-r+2)^2)}.$$
One can check that this is strictly smaller than $M := (\|A_1\|_1+2)(\|A_2\|_1+2)-4$. Thus if $\|A\|_1\geq M$, the equations \eqref{eq:sumoftermsB} and \eqref{eq:maxtermsB} are incompatible, giving us the result.
\end{proof}

We are now ready to state our growth result.

\begin{prop} For $M_n$ as defined in \eqref{eq:fibmatrix}, we have $\rankzn(M_{2n+1}) \geq \rankzn(M_n)+1$ for each $n \geq 1$.
\end{prop}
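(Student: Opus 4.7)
The plan is to apply Lemma~\ref{lem:normbound} to the block decomposition induced by the Toeplitz structure of $M_{2n+1}$. Since the $(i,j)$-entry of $M_m$ depends only on $i-j$, both the leading and the trailing $(n+1) \times (n+1)$ principal submatrices of $M_{2n+1}$ are exactly $M_n$, so we may write
\[
M_{2n+1} = \begin{bmatrix} M_n & B \\ C & M_n \end{bmatrix}
\]
for some nonnegative integer matrices $B,C$. Applying Lemma~\ref{lem:normbound} with $A_1 = A_2 = M_n$, the desired inequality $\rankzn(M_{2n+1}) \geq \rankzn(M_n) + 1$ reduces to verifying the single numerical bound $(\|M_n\|_1 + 2)^2 \leq \|M_{2n+1}\|_1 + 4$.

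The central task is thus to compute $\|M_n\|_1$ in closed form. Since $M_n$ has rank $2$, d'Ocagne's identity $F_a F_{b+1} - F_{a+1} F_b = (-1)^b F_{a-b}$ (applied with $a = 2j-1$, $b = 2i-2$) yields the rank-$2$ decomposition $(M_n)_{ij} = F_{2i-1} F_{2j-1} - F_{2i-2} F_{2j}$. Summing all entries and using the standard Fibonacci identities $\sum_{k=1}^{n+1} F_{2k-1} = F_{2n+2}$, $\sum_{k=0}^{n} F_{2k} = F_{2n+1} - 1$, and $\sum_{k=1}^{n+1} F_{2k} = F_{2n+3} - 1$, we would obtain
\[
\|M_n\|_1 = F_{2n+2}^2 - (F_{2n+1} - 1)(F_{2n+3} - 1).
\]
Catalan's identity $F_{2n+2}^2 - F_{2n+1} F_{2n+3} = -1$ then collapses this to
\[
\|M_n\|_1 = F_{2n+3} + F_{2n+1} - 2 = L_{2n+2} - 2,
\]
where $L_m$ denotes the $m$-th Lucas number.

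With this closed form in hand, the required inequality becomes $L_{2n+2}^2 \leq L_{4n+4} + 2$, which is exactly the classical Lucas identity $L_m^2 = L_{2m} + 2(-1)^m$ specialised to the even index $m = 2n+2$. The hypothesis of Lemma~\ref{lem:normbound} thus holds, in fact with equality, and the proposition follows. The main obstacle is obtaining the exact closed form for $\|M_n\|_1$: because the bound from Lemma~\ref{lem:normbound} is saturated here, no looser estimate will suffice, so one genuinely has to spot the rank-$2$ Fibonacci factorization of $M_n$ and then recognise the resulting expression as a Lucas number, at which point the Lucas doubling formula finishes the argument.
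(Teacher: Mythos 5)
Your proof is correct and follows essentially the same route as the paper's: the same Toeplitz block decomposition of $M_{2n+1}$ into two diagonal copies of $M_n$, the same application of Lemma~\ref{lem:normbound}, and the same closed form $\|M_k\|_1 = F_{2k+1}+F_{2k+3}-2$, with the saturated inequality verified by equivalent Fibonacci identities (your Lucas doubling formula $L_{2n+2}^2 = L_{4n+4}+2$ is exactly the paper's combination of Cassini's identity with the sum-of-squares formula). The only cosmetic difference is that you obtain $\|M_n\|_1$ from an explicit rank-$2$ factorization of the entries rather than from the standard formulas for sums of odd- and even-indexed Fibonacci numbers.
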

\begin{proof}

Now note that $M_{2n+1}$ has block structure
$$M_{2n+1}=\begin{bmatrix}
M_n & A \\
B & M_n
\end{bmatrix}.$$

Using standard formulas for the sum of the first consecutive odd or even indexed terms of the Fibonacci sequence one can easily show that for any $k$ the sum of the entries of $M_k$ is $F_{2k+1}+F_{2k+3}-2$. So using Lemma \ref{lem:normbound} it is enough to show that
$$(F_{2n+1}+F_{2n+3})^2 \leq F_{4n+3}+F_{4n+5}+2.$$
But the left hand side is  $F_{2n+1}^2 + 2F_{2n+1}F_{2n+3} +F_{2n+3}^2$
and, by Cassini's formula, $F_{2n+1}F_{2n+3}= F_{2n+2}^2+1$. Applying the standard formula for the sum of the squares of two consecutive Fibonacci numbers we get that the inequality holds with equality, proving the result.
\end{proof}

This can immediately gives us some simple corollaries. The first one concerns gaps between ranks.

\begin{cor}
For nonnegative integer matrices $M$, $\frac{\rankp(M)}{\rankzn(M)}$ can be arbitrarily close to zero, even if we restrict to $\rankp(M)=2$.
\end{cor}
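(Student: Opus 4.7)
The plan is to witness the claim with an explicit family, the Fibonacci Toeplitz matrices $M_n$ from~\eqref{eq:fibmatrix}. First I would verify that $\rankp(M_n)=2$ for every $n\ge 1$. Since $M_n$ sits as a submatrix of the slack matrix $S_n^{\ZZ}$, which has $\rank(S_n^{\ZZ})=2$, we have $\rank(M_n)\le 2$. The first two rows of $M_n$ are $(F_1,F_3,F_5,\ldots)=(1,2,5,\ldots)$ and $(F_1,F_1,F_3,\ldots)=(1,1,2,\ldots)$, which are not proportional, so $\rank(M_n)=2$. Invoking the fact recalled in this section that any nonnegative matrix of rank at most $2$ has nonnegative rank equal to its rank, we conclude $\rankp(M_n)=2$.

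Next I would iterate the previous proposition along the subsequence defined by $n_0=1$ and $n_{k+1}=2n_k+1$. Each application yields $\rankzn(M_{n_{k+1}})\ge \rankzn(M_{n_k})+1$, so by induction $\rankzn(M_{n_k})\ge \rankzn(M_{n_0})+k$. Since $\rankzn(M_1)\ge \rank(M_1)=2$, this gives the bound $\rankzn(M_{n_k})\ge k+2$, which is unbounded in $k$. Combining with the first step,
\[
\frac{\rankp(M_{n_k})}{\rankzn(M_{n_k})} \;\le\; \frac{2}{k+2} \;\longrightarrow\; 0,
\]
and because $\rankp(M_{n_k})=2$ for all $k$, this simultaneously establishes both the general statement and the stronger assertion restricted to $\rankp=2$.

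I do not anticipate any real obstacle here: once the growth estimate of the previous proposition is granted, the corollary reduces to chaining the inequality and reading off the ratio. The only sanity check is confirming $\rank(M_n)\neq 1$, which is immediate from the two explicit non-proportional rows exhibited above.
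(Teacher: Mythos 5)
Your proof is correct and is exactly the argument the paper intends (the paper leaves it implicit, deriving the corollary "immediately" from the growth proposition): take the Fibonacci Toeplitz matrices $M_n$, note $\rankp(M_n)=\rank(M_n)=2$, and iterate $\rankzn(M_{2n+1})\ge\rankzn(M_n)+1$ along $n_{k+1}=2n_k+1$ to drive $\rankzn$ to infinity. Your write-up just supplies the details the paper omits, so nothing further is needed.
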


The second concerns lifts of affine semigroups.

\begin{cor}
There exist $2$-dimensional affine semigroups with minimal lifts of arbitrarily high size.
\label{cor:semilift}
\end{cor}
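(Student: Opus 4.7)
The plan is to reduce the statement to a property of the Fibonacci slack matrices already constructed in this section. By Theorem~\ref{thm:YanSemigroups}, the minimal lift size of a positive affine semigroup $\Gamma$ equals $\rankzn(S_\Gamma)$, so it suffices to exhibit a sequence of $2$-dimensional positive affine semigroups whose slack matrices have unbounded nonnegative integer rank. The natural candidates are the Fibonacci semigroups $\Gamma^{\textup{Fib}}_n$: they are $2$-dimensional by construction, and we have just controlled the growth of $\rankzn$ on the Toeplitz submatrices $M_n$ of $S_n^\ZZ$.

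Next I would establish the elementary fact that $\rankzn$ is monotone under taking submatrices: if $A = B^\top C$ is a nonnegative integer factorization of size $k$, then for any choice of rows $I$ and columns $J$ the matrix $A[I,J] = B[:,I]^\top C[:,J]$ has a nonnegative integer factorization of size $k$ as well, so $\rankzn(A[I,J]) \leq \rankzn(A)$. In particular, since $M_n$ is a submatrix of $S_n^\ZZ$, we have $\rankzn(S_n^\ZZ) \geq \rankzn(M_n)$.

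Finally, I would iterate the preceding proposition. Define the sequence $n_0 = 1$ and $n_{k+1} = 2n_k+1$; then $\rankzn(M_{n_{k+1}}) \geq \rankzn(M_{n_k}) + 1$, so $\rankzn(M_{n_k}) \to \infty$ as $k \to \infty$. Combining this with the submatrix bound gives $\rankzn(S_{n_k}^\ZZ) \to \infty$, and invoking Theorem~\ref{thm:YanSemigroups} shows that the minimal lift size of $\Gamma^{\textup{Fib}}_{n_k}$ tends to infinity, proving the corollary.

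The hard part is already behind us in the preceding proposition; here the only substantive observation to record is the submatrix monotonicity of $\rankzn$, after which the corollary falls out by assembling Theorem~\ref{thm:YanSemigroups}, the growth proposition for $M_n$, and the fact that each $\Gamma^{\textup{Fib}}_n$ lives in $\ZZ^2$.
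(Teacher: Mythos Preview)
Your proposal is correct and matches the paper's intended argument: the paper presents this corollary as an immediate consequence of the preceding proposition without further proof, and you have simply spelled out the obvious details (Theorem~\ref{thm:YanSemigroups}, submatrix monotonicity of $\rankzn$, and iterating the growth bound along $n_{k+1}=2n_k+1$).
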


Note that the second Corollary implies the first. It is very plausible that there is a simple alternative proof for this second Corollary using only the geometry of affine semigroups, and not the slack matrices themselves. 

\section{Conclusion}
\label{sec:concl}

The goal of this paper was to expand the notion of lifts of convex sets to the discrete setting. Inspired by Yannakakis' original theorem on lifts of polyhedral sets, we started by showing that there is a natural generalization of the slack matrix of a cone to the setting of affine semigroups. We demonstrated that these slack matrices are also interesting objects in their own right. Indeed, they share many properties with the original notion of a slack matrix providing, in particular, a canonical embedding of (the equivalence class of) its affine semigroup with special properties. We then showed that there is a sensible notion of lift for an affine semigroup that satisfies a Yannakakis-type theorem.

This initial result suggests a number of different avenues of research that we only very briefly began to explore in this paper. The wealth of connections between affine semigroups and toric geometry and integer programming suggests that there might be interesting translations of this result to those settings. While we provide an initial sketch of how those translations might look, there seems to be room for a much deeper exploration of those connections.

Studying the structure of integer slack matrices also allowed us to produce new results regarding nonnegative integer rank of matrices, namely, that even in the case of rank 2 matrices, we can get arbitrarily large gap between nonnegative rank and integer nonnegative rank. In this direction too some questions remain. Computational evidence showed that the slack matrix of the Fibonacci semigroup $\Gamma_n^{\textup{Fib}}$ has nonnegative integer rank $n+2$, which is a much stronger result than what we showed in Proposition~\ref{lem:fibsemigroupprop}. It remains to show whether this stronger result holds for $n\geq 5$. Additionally, one might look for a geometric argument using the structure of the semigroups to prove the result of Corollary~\ref{cor:semilift}.

Other questions remain for further exploration. For instance, our motivation from integer programming---to construct a direct lift of integer points---leads to the natural question of how slack matrices and lifts of affine semigroups can be specialized to integer points in polytopes and whether this can be formulated in such a way that it has useful implications in integer programming.  

While we have not provided a complete theory of discrete lifts, this paper establishes an initial framework on which such a theory can be built. Moreover, we believe it provides some indication that the study of such lifts is not only interesting in its own right but also might lead to developments in other areas of mathematics.

\bibliography{mybib}{}
\bibliographystyle{alpha}

\end{document}